\documentclass[12pt]{article}
\usepackage{amsmath, amsthm, amssymb,amscd, mathabx,wasysym,verbatim,hyperref,mathrsfs,authblk,cleveref, tikz,graphicx}
\usepackage[all]{xy}
\theoremstyle{plain}
\newtheorem{thm}{Theorem}[section]

\newtheorem{lem}[thm]{Lemma}
\newtheorem{cor}[thm]{Corollary}

\newtheorem{prop}[thm]{Proposition}

\theoremstyle{definition}
\newtheorem{defn}[thm]{Definition}

\theoremstyle{remark}
\newtheorem{rem}[thm]{Remark}

\newcommand{\nc}{\newcommand} 
\nc{\hb}{\mathbb} 
\nc{\M}{\mathcal} 
\nc{\mf}{\mathfrak}
\nc{\mbf}{\mathbf}
\nc{\DMO}{\DeclareMathOperator}

 
\newbox\noforkbox \newdimen\forklinewidth
\forklinewidth=0.3pt
\setbox0\hbox{$\textstyle\smile$}
\setbox1\hbox to \wd0{\hfil\vrule width \forklinewidth depth-2pt
 height 10pt \hfil}
\wd1=0 cm
\setbox\noforkbox\hbox{\lower 2pt\box1\lower 2pt\box0\relax}
\def\anchor{\mathop{\copy\noforkbox}\limits}
 
\setbox0\hbox{$\textstyle\smile$}
\setbox1\hbox to \wd0{\hfil{\sl /\/}\hfil}
\setbox2\hbox to \wd0{\hfil\vrule height 10pt depth -2pt width
               \forklinewidth\hfil}
\wd1=0 cm
\wd2=0 cm
\newbox\doesforkbox
\setbox\doesforkbox\hbox{\box1 \lower 2pt\box2\lower2pt\box0\relax}
\def\nanchor{\mathop{\copy\doesforkbox}\limits}
 
\nc{\cA}{{\M A}} \nc{\cB}{{\M B}} \nc{\cC}{{\M C}} \nc{\cD}{{\M D}}
\nc{\cE}{{\M E}} \nc{\cF}{{\M F}} \nc{\cG}{{\M G}} \nc{\cH}{{\M H}}
\nc{\cI}{{\M I}} \nc{\cJ}{{\M J}} \nc{\cK}{{\M K}} \nc{\cL}{{\M L}}
\nc{\cM}{{\M M}} \nc{\cN}{{\M N}} \nc{\cO}{{\M O}} \nc{\cP}{{\M P}}
\nc{\cQ}{{\M Q}} \nc{\cR}{{\M R}} \nc{\cS}{{\M S}} \nc{\cT}{{\M T}}
\nc{\cU}{{\M U}} \nc{\cV}{{\M V}} \nc{\cW}{{\M W}} \nc{\cX}{{\M X}}
\nc{\cY}{{\M Y}} \nc{\cZ}{{\M Z}}
\nc{\Aa}{{\hb A}} \nc{\Cc}{{\hb C}} \nc{\Gg}{{\hb G}}
\nc{\Nn}{{\hb N}} \nc{\Pp}{{\hb P}} 
\nc{\Qq}{{\hb Q}} \nc{\Rr}{{\hb R}} \nc{\Zz}{{\hb Z}}
\nc{\mfa}{{\mf a}} \nc{\mfb}{{\mf b}} \nc{\mfk}{{\mf k}}
\nc{\mfm}{{\mf m}} \nc{\mfp}{{\mf p}} \nc{\mfq}{{\mf q}}
\nc{\mfr}{{\mf r}}
\nc{\fP}{{\mf P}}
\DMO*{\trdeg}{td}
\DMO*{\spec}{Spec}
\DMO*{\fork}{\nanchor}
\DMO*{\dnf}{\anchor}
\DMO{\RU}{RU}
\DMO{\deter}{det}
\DMO{\RM}{RM}
\DMO{\RC}{RC}
\DMO{\Real}{Re}
\DMO{\Imag}{Im}
\DMO{\tr}{tr}
\DMO{\qc}{QC}
\DMO{\Hu}{Hull}
\DMO{\leg}{length}
\DMO{\area}{area}
\DMO{\dia}{diameter}
\DMO{\iso}{Iso}
\DMO{\dis}{dist}
\DMO{\grad}{grad}
\DMO{\vol}{volume}
\DMO{\gra}{grad}
\DMO{\hd}{nbhd}
\DMO{\dv}{div}
\DMO{\Psl}{PSL}
\nc{\Mb}{\mathfrak^{2b/\delta}_{K_x}}
\nc{\Ma}{\mathfrak^{2a/\delta}_{K_x}}
\nc{\dif}{\mathrm{d}}
\nc{\G}{\Gamma}
\nc{\g}{\gamma}
\nc{\D}{\nabla}
\nc{\p}{\partial}
\nc{\DD}{\Delta^2}
\nc{\pp}{\partial^2} 
\nc{\de}{\delta}
\nc{\td}[2]{\trdeg{({#1}/{#2})}}
\nc{\dtd}[2]{\trdeg_{\delta}{({#1}/{#2})}}
\nc{\dspec}[1]{\spec_{\delta}{#1}}
\nc{\ddim}[1]{\dimen_{\delta}{#1}}
\nc{\gens}[1]{\langle {#1} \rangle}        
\nc{\gen}[2]{ {#1} \langle {#2} \rangle } 
\nc{\form}{\Omega}
\nc{\set}[1]{\left\{ {#1} \right\}}
\nc{\mr}{\hat}
\nc{\pr}{\partial}
\nc{\bc}[3]{\cB^{#1}({#2},{#3})=B^{#1}_{#2}(C^{#2}_{#3}(t)+B^{#1}_{#3}(C^{#2}_{#3}(t))} 
\nc{\tuple}[2]{{#1},\ldots,{#2}} \nc{\ptu}[2]{{#1}:\ldots:{#2}}

\nc{\maps}[3]{{#1}\!:\!{#2}\rightarrow{#3}}
\nc{\map}[2]{{#1}\rightarrow {#2}} \nc{\res}[2]{{#1} |_{#2}}
\nc{\imbed}{\hookrightarrow}
\title{The classification of Kleinian groups of Hausdorff dimensions at most one and Burnside's conjecture}
\author{Yong Hou\footnote{Primary:$57\mbox{M}50; 53\mbox{C}30$. Secondary:$22\mbox{A}05.$ Supported by Ambrose Monell Fundation.} }
\date{}
\begin{document}
\maketitle
\begin{abstract}
In this paper we provide the complete classification of convex cocompact Kleinian group of Hausdorff dimensions less than $1.$ In particular, we prove that every convex cocompact Kleinian group of Hausdorff dimension $<1$ is a classical Schottky group. This upper bound is sharp. The result implies that the converse of Burside's conjecture \cite{Burside} is true: All non-classical Schottky groups must have Hausdorff dimension $\ge1$. The upper bounds of Hausdorff dimensions of classical Schottky groups
has long been established by Phillips-Sarnak \cite{phillips} and Doyle \cite{Doyle}. The prove of the theorem relies on the result of Hou \cite{Hou}.\end{abstract}
\setcounter{tocdepth}{1}
\section{Introduction and Main Theorem}
We take Kleinian groups to be finitely generated, discrete subgroups of $\text{PSL}(2,\mathbb{C}).$ The main theorem is:
\begin{thm}[Classification]\label{main}
All convex cocompact Kleinian groups $\G$ with limit set of Hausdorff dimension $<1$ are classical Schottky groups. This bound is sharp.
\end{thm}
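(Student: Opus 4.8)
The plan is to split the argument into a qualitative reduction and a quantitative construction, the latter resting on the result of Hou \cite{Hou}. First I would pin down the coarse geometry of the limit set. Any connected subset of $\widehat{\mathbb{C}}$ with more than one point has Hausdorff dimension at least $1$, so the hypothesis $\dim_H \Lambda(\G) < 1$ forces $\Lambda(\G)$ to be totally disconnected. For a convex cocompact (hence geometrically finite and parabolic-free) group this is exactly the classical criterion making $\G$ a Schottky group: the convex core is a handlebody, so $\G$ is free and purely loxodromic and bounds a fundamental region cut out by $2n$ disjoint Jordan curves, where $n$ is the rank (Maskit's characterization). I would simultaneously record the analytic form of the hypothesis: by the identification $\dim_H \Lambda(\G) = \delta(\G)$ for geometrically finite groups, the critical exponent satisfies $\delta(\G) < 1$, so the Poincar\'e series $\sum_{g \in \G} e^{-s\, d(o, go)}$ converges at $s = 1$. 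This summability is the quantitative fuel for the second stage.

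The second, and main, stage is to upgrade an arbitrary Schottky structure to a classical one, i.e.\ to replace the defining Jordan curves by round circles. I would work with isometric circles: writing $g = \bigl(\begin{smallmatrix} a & b \\ c & d \end{smallmatrix}\bigr)$ with $c \neq 0$, the circle $I(g) = \{\, z : |cz+d| = 1 \,\}$ is round of radius $1/|c|$ and $g$ sends the exterior of $I(g)$ onto the interior of $I(g^{-1})$; hence if, after conjugating $\G$ by a M\"obius transformation and possibly applying Nielsen transformations, the circles $I(g_i^{\pm 1})$ bound $2n$ pairwise disjoint disks, then $\G$ is by definition a classical Schottky group. Because $e^{d(o,go)} \asymp \|g\|^2$, convergence of the Poincar\'e series at $s = 1$ forces the $\G$-translates of a fixed fundamental region, equivalently the nested families of isometric circles, to shrink and disperse; this is precisely the regime in which the defining curves become nearly round and well separated. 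The role of Hou \cite{Hou} is to make this heuristic rigorous: I would invoke it to extract, from the smallness of $\delta(\G)$, the geometric estimates guaranteeing both that a generating set exists for which the isometric circles are pairwise disjoint up to a controlled error, and that the error is small enough that each curve can be rounded to an honest circle without destroying disjointness.

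I expect the heart of the difficulty to lie in the simultaneous, as opposed to merely pairwise, control: a separation bound for any two isometric circles does not by itself guarantee that all $2n$ round circles can be chosen disjoint at once, and rounding one curve perturbs its neighbours. Handling this should require an induction on the rank $n$ with estimates uniform in $n$, feeding the output of Hou \cite{Hou} into each step so that the accumulated error stays strictly below the separation margin. The threshold $\delta(\G) < 1$ is exactly what keeps this margin positive, which is why the bound $1$ rather than some smaller constant appears.

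Finally, for sharpness I would exhibit convex cocompact groups that are not classical Schottky whose Hausdorff dimension tends to $1$ from above; the existence of non-classical Schottky groups is known (Marden), and the dimensions of explicit families can be pushed down to the threshold, so the constant $1$ cannot be raised. This dovetails with the established upper bounds on Hausdorff dimensions of classical Schottky groups of Phillips--Sarnak \cite{phillips} and Doyle \cite{Doyle}, together exhibiting $1$ as the exact transition value and yielding the converse of Burnside's conjecture as the contrapositive of the classification restricted to Schottky groups.
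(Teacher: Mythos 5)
Your reduction to the Schottky case is fine and close in spirit to the paper's (the paper argues by compressing an embedded boundary surface of $\mathbb{H}^3/\G$ rather than via total disconnectedness of the limit set, but both routes land at the same place). The genuine gap is in your second stage. Theorem \ref{HouY} of \cite{Hou} asserts only that there is \emph{some} constant $\lambda>0$ such that $\mathfrak{D}_\G<\lambda$ forces $\G$ to be classical; it supplies no geometric estimates whatsoever for groups with $\lambda\le\mathfrak{D}_\G<1$, which is precisely the range the theorem is about. Your plan to ``invoke it to extract, from the smallness of $\delta(\G)$, the geometric estimates guaranteeing'' disjoint, roundable isometric circles therefore has nothing to feed on: convergence of the Poincar\'e series at $s=1$ does not make the defining curves ``nearly round and well separated'' in any quantitative sense available here, and the proposed induction on the rank cannot close the gap because the input it needs from \cite{Hou} simply does not exist above the threshold $\lambda$. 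In effect your outline silently replaces the hypothesis $\mathfrak{D}_\G<1$ by $\mathfrak{D}_\G<\lambda$, which is the already-known case.

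The paper bridges the interval $[\lambda,1)$ by a global open--closed argument on $\mathfrak{J}_g^1$ rather than by direct estimates. Openness of the classical locus is standard; closedness is the real work: if classical groups $\G_n\to\G$ with all dimensions below one and $\G$ were non-classical, the degenerating classical fundamental domains must develop tangency, degeneration, or collapsing singularities (Lemma \ref{singular}), and each type produces, in every neighborhood of some quasi-circle of $\G$ in the Fr\'echet metric on bounded-length curves (where Bowen's criterion, Theorem \ref{Bowen}, is the only place the convergence of the Poincar\'e series at $s=1$ enters), a curve that fails to be a quasi-circle --- contradicting the open-neighborhood property of Lemma \ref{open}. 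Separately, each component of $\mathfrak{J}_g^1$ is joined to the classical locus by an explicit quasiconformal deformation path along which the Hausdorff dimension does not increase and eventually drops below $\lambda$ (Lemma \ref{path-connect}), and only at that point is \cite{Hou} invoked. To salvage your outline you would need either to reprove Hou's theorem with $\lambda=1$ directly --- which is the theorem itself --- or to supply something playing the role of this continuity and singularity analysis.
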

It was pointed out by Sarnak to me that, originally Burnside conjectured that all classical Schottky groups must have Hausdorff dimension at most one. However Burnside's conjecture was disapproved by Myrberg \cite{my, Doyle}. Theorem \ref{main}, implies the converse of Burnside's conjecture is in fact true.
\begin{cor}[Converse of Burnside's conjecture \cite{Burside}]
All non-classical Schottky groups must have Hausdorff dimension $\ge1.$ \end{cor}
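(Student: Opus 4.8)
The plan is to obtain the corollary directly from Theorem \ref{main} by contraposition; the real content is only in verifying that non-classical Schottky groups lie within the scope of that theorem. First I would record the standard structural fact that every Schottky group is convex cocompact: a Schottky group is free, purely loxodromic (it contains no parabolic elements), and geometrically finite, and a geometrically finite Kleinian group without parabolics is precisely a convex cocompact one. This holds whether or not the Schottky group is classical, so in particular every non-classical Schottky group is a convex cocompact Kleinian group and thus satisfies the hypothesis of Theorem \ref{main}.

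With this in hand I would argue by contradiction. Let $\Gamma$ be a non-classical Schottky group and suppose its limit set had Hausdorff dimension strictly less than $1$. Then $\Gamma$ is a convex cocompact Kleinian group of Hausdorff dimension $<1$, so Theorem \ref{main} applies and forces $\Gamma$ to be a classical Schottky group, contradicting the assumption that $\Gamma$ is non-classical. Hence the Hausdorff dimension of a non-classical Schottky group cannot be less than $1$, i.e.\ it must be $\ge 1$, which is exactly the assertion of the corollary.

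The only point that requires care, rather than a genuine obstacle, is matching the meaning of ``classical Schottky group'' in the conclusion of Theorem \ref{main} with the negation used to define ``non-classical.'' One must check that the classical/non-classical dichotomy is exhaustive among Schottky groups and invariant under conjugation in $\mathrm{PSL}(2,\mathbb{C})$, so that the output ``is a classical Schottky group'' is precisely the logical complement of ``non-classical.'' Since classicality is defined by the existence of a Schottky configuration bounded by round circles, a property preserved under M\"obius transformations, this compatibility is immediate and no further argument is needed.
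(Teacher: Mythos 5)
Your proposal is correct and matches the paper's (implicit) derivation: the corollary is stated as an immediate contrapositive consequence of Theorem \ref{main}, which is exactly what you carry out, with the added (harmless and reasonable) verification that Schottky groups are convex cocompact and that classicality is a conjugation-invariant dichotomy.
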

Finally we note that the proof of Theorem \ref{main} relies on the result of \cite{Hou} Theorem \ref{HouY}.  The proof of \ref{HouY} is completely different and of independent interests from the current works. 

\begin{thm}[Hou\cite{Hou}]\label{HouY}
There exists $\lambda>0$ such that any Kleinian group with limit set of Hausdorff dimension $<\lambda$ is a classical Schottky group.
\end{thm}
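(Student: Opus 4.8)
The plan is to argue in two stages: first distill the abstract group structure from the smallness of the Hausdorff dimension, and then upgrade that structure to a \emph{classical} Schottky configuration realized by round circles. Throughout I write $\Lambda(\G)\subset\widehat{\Cc}=\partial\hb H^3$ for the limit set and $\delta(\G)$ for the critical exponent of the Poincar\'e series.

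First I would reduce to the convex cocompact, torsion-free, purely loxodromic case. By the Bishop--Jones theorem the Hausdorff dimension of the conical limit set equals $\delta(\G)$, so $\dim_H\Lambda(\G)\ge\delta(\G)$ and the hypothesis forces $\delta(\G)<\lambda$. A parabolic would make the corresponding rank-one cusp subgroup push $\delta(\G)\ge 1/2$, while a geometrically infinite finitely generated group has $\delta(\G)=2$ (again Bishop--Jones); hence for $\lambda<1/2$ the group $\G$ is geometrically finite without cusps, in particular convex cocompact. A ping-pong analysis of the action on $\widehat{\Cc}$ — exploiting that small $\delta(\G)$ makes the orbit growth so sparse that the attracting and repelling fixed data of the generators are well separated, and that every nontrivial element is kept far from zero translation length — then exhibits $\G$ as a free product of loxodromic cyclic groups, that is, as a Schottky group in the topological sense (torsion and parabolics are excluded as elements of translation length $0$).

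The heart of the matter is to make the Schottky curves round. For each loxodromic $g$ let $I(g)$ and $I(g^{-1})$ denote its isometric circles; $g$ carries the exterior of $I(g)$ onto the interior of $I(g^{-1})$, and the radii of these circles shrink as the translation length $\ell(g)$ grows. I would pass to a Nielsen-reduced generating set, for which the fixed points are as separated as possible, and then show that once $\delta(\G)$ drops below an absolute constant $\lambda$, the $2\,\mathrm{rank}(\G)$ isometric circles are pairwise disjoint round circles bounding a common region. Disjointness reduces to a covering/packing estimate for $\Lambda(\G)$: all the generator fixed points lie in $\Lambda(\G)$, whose small Hausdorff dimension limits how many can cluster, while simultaneously the isometric radii are uniformly small. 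Feeding this round configuration into the Klein--Maskit combination theorem (equivalently, the ping-pong lemma with round circles as the defining curves) realizes $\G$ as a \emph{classical} Schottky group, which is the assertion.

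The step I expect to be the main obstacle is uniformity in the rank. Nothing in the hypothesis bounds $\mathrm{rank}(\G)$, so the single constant $\lambda$ must make \emph{all} of the round isometric circles simultaneously disjoint no matter how many generators there are. This rules out any argument that controls displacements element by element and forces a global estimate: one must bound the covering function of $\Lambda(\G)$ purely in terms of $\dim_H\Lambda(\G)$ and use the doubling, Ahlfors $\delta$-regular behaviour of the associated Patterson--Sullivan measure to preclude accumulation of the circles. Converting the metric smallness of the Hausdorff dimension into this rank-independent, combinatorial disjointness of \emph{round} circles is the delicate quantitative core, and is exactly where a genuinely new idea beyond the classical combination theorems is required.
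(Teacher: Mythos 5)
First, a point of comparison: this paper does not prove Theorem \ref{HouY} at all; it imports it verbatim from \cite{Hou} and explicitly says the proof there is ``completely different'' from the methods of the present paper. So your attempt can only be measured against the external proof in \cite{Hou}, which does not proceed by isometric circles of a Nielsen-reduced basis. Your first stage (Bishop--Jones to force convex cocompactness, exclusion of parabolics via $\delta\ge 1/2$ for a rank-one cusp, freeness, hence a topological Schottky group) is a reasonable and essentially standard reduction.

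The genuine gap is that your second stage asserts, rather than proves, the entire content of the theorem. The claim that once $\delta(\G)$ is below a universal constant the $2\,\mathrm{rank}(\G)$ isometric circles of a Nielsen-reduced generating set are pairwise disjoint \emph{is} the statement that $\G$ is classical; you defer it to ``a covering/packing estimate'' and then concede in your final paragraph that the rank-independent version of this estimate ``is exactly where a genuinely new idea beyond the classical combination theorems is required.'' A proof cannot end by naming the missing idea. Moreover, two of the heuristics you lean on are unsound as stated. First, small $\delta(\G)$ does not by itself bound translation lengths away from zero: a loxodromic cyclic group has $\delta=0$ with arbitrarily short translation length, so any such bound must use freeness and rank $\ge 2$ in an essential quantitative way, which you do not supply. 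Second, small Hausdorff dimension does not ``limit how many'' fixed points can cluster: the limit set is a perfect set, so points of $\Lambda(\G)$ accumulate on themselves no matter how small $\dim_H\Lambda(\G)$ is; any separation of the circles must come from measure-theoretic (Patterson--Sullivan) estimates tied to a specific normalization, and the isometric circles themselves depend on the choice of the point at infinity, with centers at $g^{\pm1}(\infty)$ rather than at the fixed points you propose to separate. Until the quantitative disjointness statement is actually established, uniformly in the rank, the proposal is an outline of a strategy, not a proof.
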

Finally, we note that Theorem \ref{main} completes the picture of  lower spectrum of the Schottky groups dimensions which compliments the upper bounds established in \cite{phillips, Doyle}. 
\subsection{Strategy of proof}
Denote $\mathfrak{J}_g$ to be the rank-$g$ Schottky space. Denote $\mathfrak{J}_{g,o}$ the open subset of classical Schottky groups of $\mathfrak{J}_g.$ For
$\G\in\mathfrak{J}_g$ denote by $\mathfrak{D}_\G$ the Hausdorff dimension of $\Lambda_\G$, the limit set of $\G.$ Let $\mathfrak{J}^1_g$ denote the open subset of $\mathfrak{J}_g$ consists of $\G$ Schottky groups with Hausdorff dimension $\mathfrak{D}_\G<1.$ The main claim of our proof can then be stated as:
\[\mathfrak{J}_{g,o}\cap\mathfrak{J}_g^1=\mathfrak{J}^1_g .\]
Since $\mathfrak{J}_{g,o}\cap\mathfrak{J}_g^1\subset\mathfrak{J}^1_g$ is open subset of $\mathfrak{J}^1_g$, our proof essentially consists of two main parts as follows:
\begin{itemize}
\item[(1)]
$\mathfrak{J}_{g,o}\cap\mathfrak{J}_g^1$ is also closed in $\mathfrak{J}^1_g$.
\item[(2)]
Every connected components of $\mathfrak{J}^1_g$ contains a point in $\mathfrak{J}_{g,o}\cap\mathfrak{J}_g^1.$
\end{itemize}
Note that it follows from part $(1)$ that we have $\mathfrak{J}_{g,o}\cap\mathfrak{J}_g^1$ consists of connected components of $\mathfrak{J}_g^1.$
\par
The proof of part $(1)$ is done as follows. It is a result of Bowen \cite{Bowen} that, a Schottky group $\G$ has Hausdorff dimension $<1,$ if and only if there exist a \emph{rectifiable} $\G$-invariant closed curve. Let $\mathscr{R}(S^1,W)$ be the space of bounded length closed curves which intersects the compact set $W\subset\mathbb{C}$ and equipped with Fr\'echet metric. It is complete space, see Section $2$. 
We show that if Hausdorff dimension $<1$ then, every quasi-circle with bounded length of $\G$ is the limit of a sequence of quasi-circles of $\G_n$ in $\mathscr{R}(S^1,W).$ We also show that 
if $\G$ is a Schottky group, then every quasi-circle of $\G$ has an open neighborhood in the relative topology of $\Psi_\G$ (see section 3) such that, every element of the open neighborhood is a quasi-circle  of $\G.$ We also define linearity and transversality invariant for quasi-circles, and show that quasi-circles of classical Schottky groups preserve these invariants, and non-classical Schottky groups do not have transverse linear quasi-circles.\par
Given a quasi-circle of a Schottky group $\G$, we show that there exists an open neighborhood (in the relative topology of space of rectifiable curves with respect to Frechet metric) about the quasi-circle such that, every point in the open neighborhood is a quasi-circle of $\G,$ see Lemma \ref{open}. Next assume that we have a sequence of
classical Schottky groups $\G_n\to\G$ to a Schottky group, and are all of Hasudorff dimensions less than one. 
We then study singularity formations of classical fundamental domains of $\G_n$ when $\G_n\to\G.$ These singularities are of three types: tangent, degenerate,
and collapsing. We show that all these singularities will imply that there exists a quasi-circle such that, every open neighborhood about this quasi-circle will contain some points which is \emph{not} a quasi-circle. Essentially, the existence of a singularity will be \emph{obstruction} to the existence of any open neighborhood that are of quasi-circles, 
see Lemma \ref{singular}. Hence it follows from these results that, if $\G_n\to\G$ with $\G_n$ classical and, all Hausdorff dimensions are of less than one then $\G$ must be a classical Schottky group.\par
The proof of part $(2)$ follows from our stronger result which is Lemma \ref{path-connect}. Where we show that one can connect any Schottky group 
$\G\in\mathfrak{J}_g^1$ by a continuous path in $\{\G_t\}_{t\in [0,1]}\in\mathfrak{J}_g^1$ to some classical Schottky group with nonincreasing Hausdorff dimension. The proof of this part relies on Theorem \ref{HouY} in \cite{Hou} and Hausdorff dimension $<1$ of $\G$.
\par

\section{Quasi-circles and generating Jordan curves }
Schottky group $\G$ of rank $g$  is defined as convex-cocompact discrete faithful representation of the free group $\mathbb{F}_g$ in $\text{PSL}(2,\mathbb{C}).$ It follows that 
$\G$ is freely generated by purely loxodromic elements $\{\g_i\}_1^g$. This implies we can find collection of 
open topological disks $ D_{i}, D_{i+g}, 1\le i\le g$ of disjoint closure $\bar D_i\cap\bar D_{i+g}=\emptyset$ in the Riemann sphere $\partial\mathbb{H}^3=\overline{\mathbb{C}}$ with
boundary curves $\partial\bar D_{i}=c_{i},\partial \bar D_{i+g}=c_{i+g}.$ By definition $c_{i},c_{i+g}$ are closed Jordan curves in Riemann sphere 
$\partial\mathbb{H}^3,$ such that $\g_i(c_{i})=c_{i+g}$ and $\g_i(D^o_{i})\cap D^o_{i+g}=\emptyset.$ Whenever there exists a
set $\{\g_1,...,\g_g\}$ of generators with all $\{c_{i},c_{i+g}\}_1^g$ as circles, then it is 
called a classical Schottky group with $\{\g_1,...,\g_g\}$ classical generators.\par
Schottky space $\mathfrak{J}_g$ is defined as space of all rank $g$ Schottky groups up to conjugacy by $\text{PSL}(2,\mathbb{C}).$ By normalization,
we can chart $\mathfrak{J}_g$ by $3g-3$ complex parameters. Hence $\mathfrak{J}_g$ is $3g-3$ dimensional complex manifold. The bihomolomorphic 
$\text{Auto}(\mathfrak{J}_g)$ group is $\text{Out}(\mathbb{F}_g),$ which is isomorphic to quotient of the handle-body group. Denote by $\mathfrak{J}_{g,o}$ the set of all elements of $\mathfrak{J}_g$ that are classical Schottky groups. Note that $\mathfrak{J}_{g,o}$ is open in $\mathfrak{J}_g.$ On the other hand it is nontrivial result due to Marden that $\mathfrak{J}_{g,o}$ is non-dense subset of $\mathfrak{J}_g.$ However, it follows from
Theorem\ref{HouY}, subset of $\mathfrak{J}_g$ with Hausdorff dimension less than some $\lambda$ is in $\mathfrak{J}_{g,o}$ and is $3g-3$ dimensional open connected submanifold consists of classical Schottky . \\
\\
\noindent{\bf  Notations:} 
\begin{itemize}
\item
Given $\G$ a Kleinian group, we denote by $\Lambda_\G$ and $\Omega_\G$ and $\mathfrak{D}_\G$ its limit set, region of discontinuity,
and Hausdorff dimension respectively throughout this paper. 
\item
$\mathfrak{J}_g^\lambda=\{\G\in\mathfrak{J}_{g} | \mathfrak{D}_\G<\lambda\}$ and $\mathfrak{J}^\lambda_{g,o}=\{\G\in\mathfrak{J}_{g,o} | \mathfrak{D}_\G<\lambda\}$, for  $\lambda>0.$
\item
$\overline{\mathfrak{J}^\lambda_{g,o}}$ defined as: the closure of set of all classical Schottky groups with Hausdorff dimension $\le\lambda$ in $\mathfrak{J}_g.$

\item
Given a fundamental domain $\mathcal{F}$ of $\G$, we denote the orbit of $\mathcal{F}$ under actions of $\G$ by $\mathcal{F}_\G.$ We also say $\mathcal{F}_\G$
is a classical fundamental domain of classical Schottky group if $\partial\mathcal{F}$ are disjoint circles.
\end{itemize}
\begin{defn}[Quasi-circles]
Given a geometrically finite Kleinian group $\G,$ a closed $\G$-invariant Jordan curve that contains the limit set  $\Lambda_\G$ is called \emph{quasi-circle} of $\G$.
\end{defn}

Next we give a construction of quasi-circles of $\G$ which is a generalization of the construction by Bowen \cite{Bowen}. \par
Let $\mathcal{F}$ be a fundamental domain of $\G$, and $\{c_i\}_1^{2g}$ be the collection of $2g$ disjoint Jordan curves comprising $\partial\mathcal{F}.$  Let $\zeta$ denote collection of arcs $\zeta=\{\zeta_i\}$ connecting points $p_i\in c_i,p'_i\in c_{i+g}$  for $1\le i\le g,$ and 
arcs on $c_{i+g}$ that connects $p_{i+g}$ to $\g_i(p_i)$ and $p'_{i+g}\in c_{i+g}$ to $\g_i(p'_i).$
So $\zeta$ is a set of $g$ disjoint curves connecting disjoint points on collection of Jordan curves of $\partial\mathcal{F}$ (Figure 1 ). \par
\begin{figure}[ht!]
\centering
\includegraphics[width=45mm]{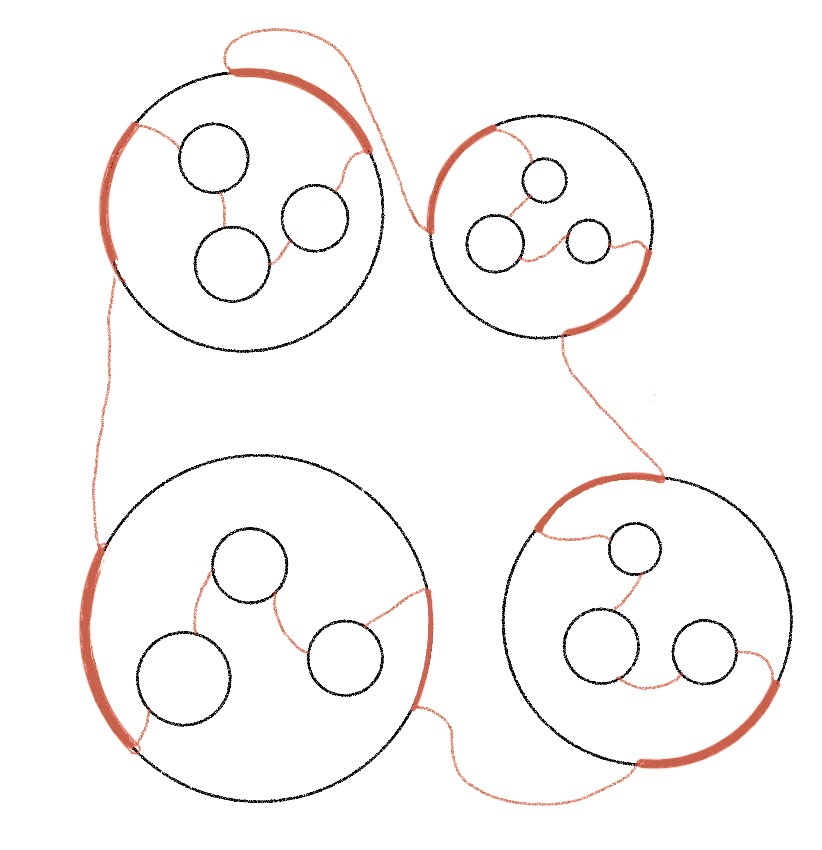}  
\caption{Quasi-circle \label{overflow}}
\end{figure}
$\eta_\G=\Lambda_\G\cup\cup_{\g\in\G}\g(\zeta)$ defines a $\G$-invariant closed curve containing $\Lambda_\G.$ 
$\eta_\G$ defines a quasi-circle of $\G$. Obviously there are infinitely many quasi-circles and different $\zeta$ gives a different quasi-circles. Note that, the simply connected regions $\mathbb{C}\setminus\eta_\G$ gives the Bers simultaneous uniformization of Riemann surface $\Omega_\G/\G.$

\begin{defn}[Generating curve]\label{marking}
Given a quasi-circle $\eta_\G$ of $\G.$ We say a collection of disjoint curves $\zeta$ is a generating curve of $\eta_\G$, if $\eta_\G$ can be generated by $\zeta.$
\end{defn}

Note that the quasi-circles constructed in \cite{Bowen}, which requires that $p_{i+g}$ is a imagine of $p_i$ under element of $\g_i$, is a subset of the collection that we have defined here. In fact, this generalization is also used for the construction of quasi-circles of non-classical Schottky groups. 

\begin{prop}\label{marking}
Every quasi-circle $\eta_\G$ of $\G$ is generated by some generating curves $\zeta.$
\end{prop}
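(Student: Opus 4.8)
The plan is to take as generating curve the trace of $\eta_\G$ on a closed fundamental domain, $\zeta:=\eta_\G\cap\overline{\mathcal{F}}$, and to verify that this $\zeta$ is a generating curve which regenerates $\eta_\G$. I would first record that, since $\G$ is a purely loxodromic Schottky group, $\overline{\mathcal{F}}\subset\Omega_\G$ and hence $\Lambda_\G\cap\overline{\mathcal{F}}=\emptyset$. Thus $\zeta$ is disjoint from the limit set and is a closed subset of the Jordan curve $\eta_\G$; as a closed subset of a circle it is a disjoint union of closed arcs, and its endpoints must lie on $\partial\mathcal{F}=\bigcup_{j=1}^{2g}c_j$, since a point of $\eta_\G$ can leave $\overline{\mathcal{F}}$ only through its boundary. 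Isolated tangential contacts of $\eta_\G$ with $\partial\mathcal{F}$, if any, I would remove by a small adjustment of $\mathcal{F}$ supported in $\Omega_\G$.

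Next I would prove finiteness of the arc system and the regeneration identity. The quotient $S=\Omega_\G/\G$ is compact and $\eta_\G\cap\Omega_\G$ is a $\G$-invariant, properly embedded $1$-manifold (closed in $\Omega_\G$), so its image in $S$ is a compact $1$-dimensional set; pulling back to the single relatively compact tile $\overline{\mathcal{F}}$ leaves only finitely many arcs, by proper discontinuity of $\G$ on $\Omega_\G$. For regeneration, $\G$-invariance of $\eta_\G$ and the tiling $\overline{\Omega_\G}=\bigcup_{\g\in\G}\g(\overline{\mathcal{F}})$ give $\bigcup_{\g\in\G}\g(\zeta)=\eta_\G\cap\Omega_\G$. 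Since $\Lambda_\G$ is a Cantor set, it is nowhere dense in $\eta_\G\cong S^1$, so the arcs $\eta_\G\cap\Omega_\G$ are dense in $\eta_\G$ and therefore $\Lambda_\G\cup\bigcup_{\g\in\G}\g(\zeta)=\eta_\G$; this is precisely the assertion that $\zeta$ generates $\eta_\G$.

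It then remains to match $\zeta$ with the combinatorial shape of a generating curve, and this is where I expect the real difficulty. The endpoint pairing is forced by invariance: if an arc of $\zeta$ ends at $q\in c_i$ then $\g_i(q)\in c_{i+g}$ again lies on $\eta_\G\cap\overline{\mathcal{F}}=\zeta$, so the crossings on $c_i$ and on $c_{i+g}$ correspond bijectively under $\g_i$. The subtlety is that an arbitrary $\G$-invariant Jordan curve may cross $\partial\mathcal{F}$ in a pattern not literally equal to the one in the construction, connecting $c_i$ to some $c_j$ with $j\neq i+g$, or threading a given handle of $S$ more than once. The main obstacle is to show that connectedness of the single Jordan curve $\eta_\G$ (equivalently, that $\overline{\mathbb{C}}\setminus\eta_\G$ has exactly the two simply connected components furnishing the Bers uniformization of $\Omega_\G/\G$) constrains these crossings enough that, after an admissible isotopy of $\mathcal{F}$, equivalently a reparametrization of the generators, supported away from $\Lambda_\G$, the trace $\zeta$ becomes a genuine generating curve consisting of arcs joining each $c_i$ to $c_{i+g}$ together with the connector arcs along the $c_{i+g}$. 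Neither the regeneration identity nor $\G$-invariance is disturbed by such an isotopy, so this normalization completes the proof.
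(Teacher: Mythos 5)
Your core argument is the same as the paper's: the paper also sets $\xi=\mathcal{F}\cap\eta_\G$ and $\bar\xi=\partial\mathcal{F}\cap\eta_\G$, observes that the $\G$-translates of these arcs are disjoint, and concludes $\eta_\G=\Lambda_\G\cup\G(\xi\cup\bar\xi)$ from $\overline{\G(\xi)}\setminus\G(\xi)=\Lambda_\G$; your first two paragraphs reproduce this with more care (the observation $\Lambda_\G\cap\overline{\mathcal{F}}=\emptyset$, finiteness of the arc system via cocompactness of $\Omega_\G/\G$, and density of $\eta_\G\cap\Omega_\G$ in $\eta_\G$ because $\Lambda_\G$ is nowhere dense in the Jordan curve). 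The one divergence is your third paragraph: the paper's Definition of generating curve only requires that $\zeta$ generate $\eta_\G$ in the sense of the regeneration identity, so the paper stops where your second paragraph stops and never addresses whether the trace has the specific combinatorial shape (arcs from $c_i$ to $c_{i+g}$ plus connectors) of the model construction. If one does insist on that stronger normal form, your isotopy argument is only a sketch --- you have not actually shown that connectedness of $\eta_\G$ forces the crossing pattern to be normalizable --- but since the statement as defined in the paper does not demand it, this does not constitute a gap in the proof of the proposition as stated.
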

\begin{proof}
Let $\eta_\G$ be a quasi-circle of $\G.$ Let $\mathcal{F}$ be a fundamental domain of $\G.$ Set $\xi=\mathcal{F}\cap\eta_\G$ and 
$\bar\xi=\partial\mathcal{F}\cap\eta_\G.$ Then $\xi,\bar\xi$ consists of collection of disjoint curves which only intersects along $\partial\mathcal{F}.$
Hence we have, $\g(\xi)\cap\g'(\xi)=\emptyset$ for $\g,\g'\in\G$ with $\g\not=\g'.$ Since $\overline{\G(\xi)}-\G(\xi)=\Lambda_\G$ we have have 
$\eta_\G=\Lambda_\G\cup\G(\xi\cup\bar\xi),$ hence $\xi\cup\bar\xi$ is a generating curve of $\eta_\G.$

\end{proof}

\begin{defn}[Linear quasi-circle]
We call a quasi-circle $\eta_\G$ \emph{linear} if, 
 $\eta_\G\backslash\Lambda_\G$ consists of points, circular arcs or  lines.
\end{defn}

Note that, if $\eta_\G$ is linear then there exists  $\mathcal{F}_\G$ such that, $\eta_\G\cap \mathcal{F}_\G$ and $\eta_\G\cap \partial\mathcal{F}_\G$ 
are piece-wise circular arcs or lines.\par
We say an arc $\zeta\subset\eta_\G\cap \mathcal{F}_\G$
is orthogonal if the tangents at intersections on $\partial\mathcal{F}_\G$
are orthogonal with $\partial\mathcal{F}_\G,$ and an arc $\xi\subset\eta_\G\cap \partial\mathcal{F}_\G$ is parallel if $\xi\subset\partial\mathcal{F}_\G.$

\begin{defn}[Right-angled quasi-circle]
Given a linear quasi-circle $\eta_\G$ of $\G$, if all linear arcs intersect at right-angle then we say $\eta_\G$ is \emph{right-angled quasi-circle}.
\end{defn}

\begin{defn}[Transverse quasi-circle]
Given a quasi-circle $\eta_\G$ of $\G$, we say $\eta_\G$ is \emph{transverse} quasi-circle if $\eta_\G$ intersects $\partial\mathcal{F}_\G$ orthogonally for some 
$\mathcal{F}_\G$ and, $\eta_\G$ have no parallel arc. Otherwise, we say $\eta_\G$ is non-transverse.

\end{defn}

\begin{defn}[Parallel quasi-circle]
Given a quasi-circle $\eta_\G$ of $\G$, we say $\eta_\G$ is \emph{parallel} quasi-circle if there exists some arc $\eta$ of $\eta_\G$ such that 
$\eta\subset\partial\mathcal{F}_\G$

\end{defn}

\begin{prop}
Transverse quasi-circles always exists for a given Schottky group $\G.$ 
\end{prop}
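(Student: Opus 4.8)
The plan is to produce, for an arbitrary Schottky group $\Gamma$ of rank $g$, a generating curve $\zeta$ whose associated quasi-circle $\eta_\Gamma$ satisfies both defining requirements of transversality: it meets the boundary of a fundamental domain only at right angles, and no arc of it lies in that boundary. By the construction preceding Proposition~\ref{marking}, any admissible $\zeta$ already yields a quasi-circle, so the task is entirely to normalise the \emph{shape} of $\zeta$. First I would fix a fundamental domain $\mathcal{F}$ whose bounding Jordan curves $c_1,\dots,c_{2g}$ are smooth (indeed real-analytic, since $c_{i+g}=\gamma_i(c_i)$ is the M\"obius image of $c_i$); such a domain is available for any Schottky group, and it is what makes the notion of an orthogonal crossing of $\partial\mathcal{F}$ meaningful.

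To install orthogonality I would route each connecting arc $\zeta_i$ from a point $p_i\in c_i$ to its partner on $c_{i+g}$ so that it leaves $c_i$ and arrives at $c_{i+g}$ perpendicularly, taking the two endpoints to be matched by the generator in the Bowen normalisation of the remark after Proposition~\ref{marking}. The decisive feature here is that every element of $\Gamma$ is a M\"obius map, hence conformal: if $\zeta_i$ is orthogonal to $c_i$ at $p_i$, then $\gamma(\zeta_i)$ is automatically orthogonal to $\gamma(c_i)$ at $\gamma(p_i)$ for every $\gamma\in\Gamma$. Consequently the single right-angle condition imposed inside $\mathcal{F}$ propagates across the entire orbit $\partial\mathcal{F}_\Gamma=\bigcup_\gamma\gamma(\partial\mathcal{F})$ with no further work, and at each identified boundary point the incoming and outgoing translates of $\zeta$ are both perpendicular to the shared curve and approach from opposite sides, so $\eta_\Gamma$ in fact crosses $\partial\mathcal{F}_\Gamma$ smoothly and transversally.

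The remaining requirement---that $\eta_\Gamma$ carry no parallel arc---is where the real work lies, because for $g\ge2$ the handle arcs alone do not close up into a single Jordan curve, and the general construction uses arcs lying \emph{along} $\partial\mathcal{F}$ to chain the boundary components together. I would show that such a boundary arc is never topologically necessary: any sub-arc of $\zeta$ contained in a curve $c_{i+g}$ can be homotoped, rel its endpoints, to the $\mathcal{F}$-side interior, since it and its pushed-in copy cobound a thin strip in $\overline{\mathcal{F}}$ (the curve $c_{i+g}$ is two-sided in $\overline{\mathbb{C}}$). This homotopy does not alter the combinatorial pattern of identifications, so the orbit of the new generating curve is isotopic to the old one and remains an embedded, $\Gamma$-invariant Jordan curve containing $\Lambda_\Gamma$; after the push the endpoints are made orthogonal as above. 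A general-position perturbation keeps the finitely many arcs of $\zeta$, and hence their orbit, pairwise disjoint.

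The main obstacle, then, is not the metric normalisation---conformality of $\Gamma$ disposes of the orthogonality condition essentially for free---but the topological claim that the connectivity forcing the parallel arcs can always be rerouted through the interior of $\mathcal{F}$ without disconnecting the curve or creating self-intersections. I expect this to reduce to the two ingredients above: the strip homotopy that lifts each boundary arc off $\partial\mathcal{F}$ while preserving its isotopy type, together with a transversality/general-position argument keeping the orbit embedded. Granting these, the resulting $\eta_\Gamma$ meets $\partial\mathcal{F}_\Gamma$ orthogonally and has no parallel arc, so it is a transverse quasi-circle of $\Gamma$.
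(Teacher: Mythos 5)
Your proposal follows the same route as the paper's proof: fix a fundamental domain, choose Bowen-matched endpoints $p_{i+g}=\gamma_i(p_i)$, and join them by arcs meeting the bounding curves orthogonally, with conformality of the M\"obius action propagating the right angle across the whole orbit $\partial\mathcal{F}_\G$. The paper's proof is exactly this and nothing more---a single sentence. What you add, and what the paper silently skips, is the connectivity issue: for $g\ge 2$ the $g$ handle arcs alone, each running from $c_i$ to $c_{i+g}$ with matched endpoints, do \emph{not} develop into a single Jordan curve (the orbit of such an arc under $\langle\gamma_i\rangle$ already closes up on the two fixed points of $\gamma_i$, so the union splits into $g$ disjoint closed curves), and hence the chaining arcs of the general construction---the ones lying along $\partial\mathcal{F}$---are genuinely needed and must then be pushed off the boundary to kill the parallel arcs. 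Your strip-homotopy argument for doing this, followed by re-imposing orthogonality at the new endpoints, is the right fix and preserves the combinatorial pattern of identifications, so your write-up is more complete than the paper's; the one step you should still make explicit is that the pairing of crossing points can be chosen so that the developed curve is connected (equivalently, that the arcs project to a multicurve separating the genus-$g$ quotient surface into two planar pieces), which is the combinatorial heart of Bowen's construction and is asserted nowhere in the paper.
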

\begin{proof}
Let $\mathcal{F}$ be bounded by $2g$ distinct Jordan closed curves and take any curve connecting $p_i\in c_i,p_{i+g}\in c_{i+g}$ such that $p_{i+g}=\g_i(p_i)$ and $p_{i+g}\in c_{i+g}=\g_i(p_{i+g})$ for $1\le i\le g,$ that intersects $c_i,c_{i+g}$
orthogonally.
\end{proof}

It should be noted that a quasi-circle of $\G$ in general is not necessarily \emph{rectifiable}. For instance, if we take $\zeta$ to be some non-rectifiable generating curves then,
$\eta_\G$ will be non-rectifiable. Recall a curve is said to be rectifiable if and only if the $1$-dimensional Hausdorff measure of the curve is finite.
This is not the only obstruction to rectifiability, in fact we have the following result of Bowen:

\begin{thm}[\cite{Bowen}]\label{Bowen}
For a given Schottky group $\G,$ the Hausdorff dimension of limit set is $\mathfrak{D}_\G<1,$ if and only if there exists a rectifiable quasi-circle for $\G.$
\end{thm}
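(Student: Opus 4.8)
The plan is to reduce both implications to a single geometric comparison: the length contributed to a quasi-circle by the $\G$-translates of an arc in the fundamental domain is, term by term, comparable to the Poincar\'e series of $\G$, so that rectifiability of a quasi-circle becomes convergence of that series at the exponent $s=1$.

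First I would fix $\mathcal{F}$ bounded by the disjoint curves $c_1,\dots,c_{2g}$ and record that $\overline{\mathcal{F}}$ is a compact subset of $\Omega_\G$, so it lies at a positive distance from $\Lambda_\G$ (each $c_i\subset\Omega_\G$ and $\Lambda_\G$ is compact). This separation is what yields \emph{uniform} bounded distortion: every $\g\in\G$ is univalent on a fixed neighborhood of $\overline{\mathcal{F}}$ (its pole $\g^{-1}(\infty)$ sits near $\Lambda_\G$, away from $\overline{\mathcal{F}}$), so by the Koebe distortion theorem $|\g'|$ varies over $\overline{\mathcal{F}}$ by a factor bounded independently of $\g$. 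Consequently, for any rectifiable arc $\alpha\subset\overline{\mathcal{F}}$ of positive diameter one has, in the spherical metric and with constants uniform in $\g$, $\leg(\g(\alpha))\asymp |\g'|\,\leg(\alpha)$ and $\dia(\g(\mathcal{F}))\asymp|\g'|\asymp e^{-d(o,\g o)}$, where $d$ is hyperbolic distance and $o\in\mathbb{H}^3$ a basepoint. I would then invoke the Patterson--Sullivan/Bowen theory identifying $\mathfrak{D}_\G$ with the critical exponent $\delta(\G)$ of $\sum_{\g\in\G}e^{-s\,d(o,\g o)}$: the series converges for $s>\delta(\G)$, and, because convex cocompact groups are of divergence type, it diverges for every $s\le\delta(\G)$.

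For the forward implication, assume $\mathfrak{D}_\G<1$ and build $\eta_\G=\Lambda_\G\cup\bigcup_{\g}\g(\zeta)$ from a generating curve $\zeta$ consisting of finitely many smooth arcs in $\overline{\mathcal{F}}$. Then $\mathcal{H}^1(\eta_\G)\le \mathcal{H}^1(\Lambda_\G)+\sum_{\g}\leg(\g(\zeta))$. The first term vanishes since $\dim_H\Lambda_\G=\mathfrak{D}_\G<1$ gives $\mathcal{H}^1(\Lambda_\G)=0$, and the second is at most $C\sum_{\g}\dia(\g(\mathcal{F}))\asymp C\sum_{\g}e^{-d(o,\g o)}<\infty$ because $1>\mathfrak{D}_\G=\delta(\G)$. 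Hence $\eta_\G$ has finite length and is rectifiable. For the converse, let $\eta_\G$ be a rectifiable quasi-circle of length $L<\infty$. By Proposition \ref{marking} it is generated by some $\zeta$, which must contain an arc $\xi_0$ joining the disjoint curves $c_i,c_{i+g}$; thus $\dia(\xi_0)\ge\dis(c_i,c_{i+g})>0$. Since the translates $\g(\xi_0)$ lie in the disjoint tiles $\g(\mathcal{F})$, one gets $L\ge\sum_{\g}\leg(\g(\xi_0))\ge c\sum_{\g}\dia(\g(\mathcal{F}))\asymp c\sum_{\g}e^{-d(o,\g o)}$. Finiteness of $L$ forces convergence at $s=1$, which by the divergence dichotomy above is possible only when $1>\delta(\G)=\mathfrak{D}_\G$.

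The step I expect to be the main obstacle is the borderline case $\mathfrak{D}_\G=1$ in the converse: there $\mathcal{H}^1(\Lambda_\G)$ may be finite, so a crude ``$\Lambda_\G\subset\eta_\G$ has infinite measure'' argument (which handles $\mathfrak{D}_\G>1$) fails, and one genuinely needs the divergence-type property $\sum_{\g}e^{-\delta(\G)d(o,\g o)}=\infty$ to exclude a rectifiable quasi-circle at $\delta(\G)=1$. The remaining care is in the uniform distortion estimate, which depends on the positive separation of $\overline{\mathcal{F}}$ from $\Lambda_\G$ and on controlling the poles of the $\g$; with these in hand the two directions become the two sides of the same convergence criterion.
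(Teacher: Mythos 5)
The paper does not prove this theorem itself---it cites Bowen and notes only that the proof rests on the equivalence between $\mathfrak{D}_\G<1$ and convergence of the Poincar\'e series---and your argument is exactly that route: bounded distortion on a compact fundamental domain reduces rectifiability of $\eta_\G$ to convergence of $\sum_\g|\g'|$ at exponent $1$, with the divergence-type property handling the borderline $\delta(\G)=1$. Your proposal is correct and matches the approach the paper attributes to \cite{Bowen} (compare also Proposition \ref{rec}), so there is nothing to flag.
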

The proof of Theorem \ref{Bowen} relies on the fact that the Poincare series of $\G$ converges if and only if $\mathfrak{D}_\G<1$ \cite{Bowen}.
\begin{prop}\label{rec}
Let $\G$ be a Schottky group of $\mathfrak{D}_\G<1.$ Suppose a given generating curve $\zeta$ is a rectifiable curve. Then $\eta_\G$ is a rectifiable quasi-circle of $\G.$
\end{prop}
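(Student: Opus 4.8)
The plan is to estimate the one-dimensional Hausdorff measure $\mathcal{H}^1(\eta_\G)$ directly from the decomposition $\eta_\G=\Lambda_\G\cup\bigcup_{\g\in\G}\g(\zeta)$ supplied by the construction, and to read off rectifiability from finiteness of this measure. First I would dispose of the limit set: the hypothesis $\mathfrak{D}_\G<1$ means $\dim_H\Lambda_\G<1$, so $\Lambda_\G$ is $\mathcal{H}^1$-null and contributes nothing. By countable subadditivity of Hausdorff measure it then suffices to show
\[\sum_{\g\in\G}\leg(\g(\zeta))<\infty,\]
where $\leg$ denotes the length ($=\mathcal{H}^1$) of the rectifiable arc-system $\g(\zeta)$.

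The second step converts each term $\leg(\g(\zeta))$ into a quantity governed by the Poincar\'e series. Since $\zeta$ is a fixed rectifiable curve lying in the compact set $\overline{\mathcal{F}}\cap\Omega_\G$ and each $\g$ is conformal, $\leg(\g(\zeta))=\int_\zeta|\g'|\,ds\le\bigl(\sup_{z\in\zeta}|\g'(z)|\bigr)\,\leg(\zeta)$. The Schottky structure provides a \emph{bounded distortion} (Koebe-type) estimate: the inverse branches of the generators form a conformal iterated function system whose tiles $\g(\overline{\mathcal{F}})$ nest into the disks $D_i$ with uniformly bounded distortion, so that $\sup_{z\in\zeta}|\g'(z)|\le C\,|\g'(z_0)|\le C'\,\dia(\g(\overline{\mathcal{F}}))$ for a fixed base point $z_0\in\zeta$, with $C,C'$ independent of $\g$. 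Hence $\leg(\g(\zeta))\le C'\,\leg(\zeta)\,\dia(\g(\overline{\mathcal{F}}))$.

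The third step invokes $\mathfrak{D}_\G<1$ through the Poincar\'e series. As recorded in the discussion of Theorem \ref{Bowen}, the Poincar\'e series of $\G$ converges precisely when $\mathfrak{D}_\G<1$; equivalently, since $\mathfrak{D}_\G$ is the critical exponent of $\sum_\g\dia(\g(\overline{\mathcal{F}}))^s$, the choice $s=1>\mathfrak{D}_\G$ yields $\sum_{\g\in\G}\dia(\g(\overline{\mathcal{F}}))<\infty$. Combining with the previous step, $\sum_\g\leg(\g(\zeta))\le C'\,\leg(\zeta)\sum_\g\dia(\g(\overline{\mathcal{F}}))<\infty$, so that $\mathcal{H}^1(\eta_\G)\le\mathcal{H}^1(\Lambda_\G)+\sum_\g\leg(\g(\zeta))<\infty$. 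Being a Jordan curve of finite $\mathcal{H}^1$-measure, $\eta_\G$ is a rectifiable quasi-circle of $\G$.

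I expect the main obstacle to be the uniform bounded-distortion estimate in the second step: one must guarantee that $\sup_\zeta|\g'|$ is comparable to a single derivative value $|\g'(z_0)|$ with a constant independent of the word $\g$, i.e. that the poles $\g^{-1}(\infty)$ remain at a definite ``conformal distance'' from the compact seed $\zeta$ as the word length grows. This is exactly the Koebe distortion property of the Schottky conformal system, and it is what legitimizes the passage from the integrated length $\int_\zeta|\g'|$ to the point-evaluated Poincar\'e series; without it the per-tile length need not be controlled by the tile diameter, and the convergence argument would break down.
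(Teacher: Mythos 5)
Your argument is essentially the paper's own proof: both decompose $\eta_\G$ as $\Lambda_\G\cup\bigcup_\g\g(\zeta)$, discard $\Lambda_\G$ as $\mathcal{H}^1$-null since $\mathfrak{D}_\G<1$, and bound $\sum_\g\leg(\g(\zeta))$ by $\leg(\zeta)$ times the Poincar\'e series at exponent $1$, which converges because $\mathfrak{D}_\G<1$. The only difference is that you make explicit the uniform bounded-distortion (Koebe) estimate that the paper compresses into the symbol $\asymp$, which is a welcome clarification rather than a departure.
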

\begin{proof}
Let $\mu^1$ be the $1$-dimensional Hausdorff measure. Since $\mathfrak{D}_\G<1,$ we have $\mu^1(\Lambda_\G)=0.$ Let $\G=\{\g_k\}_{k=1}^\infty$, 
let $\g'_k$ denotes the derivative of $\g_k.$ Then we have,   
\begin{align*}
\mu^1(\eta_\G)&=\sum^\infty_{k=1}\mu^1(\g_k(\zeta)) \\
&\asymp\mu^1(\zeta)\sum^\infty_{k=1}|\g'_k(z)|,  \quad z\in \zeta.
\end{align*} 
Also since $\mathfrak{D}_\G<1$ if and only if Poincare series satisfies $\sum^\infty_{k=1}|\g'_k|<\infty.$ This implies that $\eta_\G$ is rectifiable if and only if 
$\zeta$ rectifiable.       
\end{proof}
Let $W\subset\mathbb{C}$ be a compact set. Denote the space of closed curves with \emph{bounded} length in $\mathbb{C}$ that intersect with $W$ by: 
\[\mathscr{R}(S^1,W)\subset\{h:S^1\to\mathbb{C}| h(S^1)\cap W\not=\emptyset,\quad\text{$h$ continuous rectifiable map}\}.\]
For $h_1,h_2\in\mathscr{R}(S^1,W)$ let $\ell(h_1),\ell(h_2)$ be it's respective arclength. The Fr\'{e}chet distance is defined as,
 \[d_F(h_1,h_2)=\inf\{\sup|h_1(\sigma_1)-h_2(\sigma_2)|;\sigma_1,\sigma_2\in\text{Homeo}(S^1)\}+|\ell(h_1)-\ell(h_2)|.\]
For a given compact $W\subset\mathbb{C},$ the space of closed curves with bounded length $\mathscr{R}(S^1,W)$ is a metric space with respect to $d_F.$ 
Two curves in $\phi,\psi\in\mathscr{R}(S^1,W)$ are same if there exists parametrization $\sigma$ such that $\psi(\sigma)=\phi$ and  $\ell(\phi)=\ell(\psi).$
The topology on $\mathscr{R}(S^1,W)$ is defined with respect to the metric $d_F$, see \cite{BR} p388. Let $\xi$ be a generating curve for $\eta_\G$. Fix a indexing of $\G$, set $\xi_i=\cup^i_1\g_j(\xi)$. Let $\sigma_i$ be a parametrization of $\xi_i$ such that 
$\sigma=\cup_i\sigma_i$ define a parametrization of $\eta_\G.$ Then $d_F(\eta^1_\G,\eta^2_\G)\le M(\inf_{\sigma^1,\sigma^2}|\xi(\sigma^1_k)-\zeta(\sigma_k^2)|+|\ell(\xi)-\ell(\zeta)|)$ for some $M, k.$ This implies continuity of $\eta_\G$ with respect to generating curve $\xi.$

\begin{prop}\label{curves-space}
For a given compact $W\subset\mathbb{C},$ the space $\mathscr{R}(S^1,W)$ is complete metric space with respect to $d_F.$
 \end{prop}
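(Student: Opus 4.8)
The plan is to show that every $d_F$-Cauchy sequence converges in $\mathscr{R}(S^1,W)$. Let $\{h_n\}\subset\mathscr{R}(S^1,W)$ be Cauchy. Since the definition of $d_F$ gives $|\ell(h_n)-\ell(h_m)|\le d_F(h_n,h_m)$, the arclengths $\ell(h_n)$ form a Cauchy sequence in $\mathbb{R}$ and hence converge to some $L\ge 0$; in particular $C:=\sup_n\ell(h_n)<\infty$. I would then reparametrize each $h_n$ by constant speed, obtaining representatives $g_n\colon S^1\to\mathbb{C}$ with $|g_n(s)-g_n(t)|\le \tfrac{C}{2\pi}\,|s-t|$, so that $\{g_n\}$ is equi-Lipschitz. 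Because every $g_n$ meets the compact set $W$ and has length at most $C$, its image lies in the bounded set $\{z:\dis(z,W)\le C\}$, so $\{g_n\}$ is also uniformly bounded.

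By the Arzel\`a--Ascoli theorem there is a subsequence $g_{n_k}$ converging uniformly to a map $g\colon S^1\to\mathbb{C}$. The limit $g$ is again $\tfrac{C}{2\pi}$-Lipschitz, hence rectifiable with $\ell(g)<\infty$; and choosing $t_k$ with $g_{n_k}(t_k)\in W$ and passing to a convergent subsequence $t_k\to t_*$ gives $g(t_*)=\lim_k g_{n_k}(t_k)\in W$, since $W$ is closed. Thus $g\in\mathscr{R}(S^1,W)$. The Fr\'echet part of the distance is controlled by taking $\sigma_1=\sigma_2=\mathrm{id}$:
\[
\inf_{\sigma_1,\sigma_2\in\mathrm{Homeo}(S^1)}\ \sup\,|g_{n_k}(\sigma_1)-g(\sigma_2)|\ \le\ \sup_t|g_{n_k}(t)-g(t)|\ \longrightarrow\ 0 .
\]
Once the length term is also controlled (see below), this yields $d_F(g_{n_k},g)\to 0$, and since a Cauchy sequence with a convergent subsequence converges to the same limit, we would conclude $d_F(h_n,g)\to 0$, proving completeness.

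The step I expect to be the main obstacle is the length term $|\ell(h_{n_k})-\ell(g)|$. Uniform convergence only gives lower semicontinuity of length, $\ell(g)\le\liminf_k\ell(h_{n_k})=L$, and strict inequality is \emph{a priori} possible: a curve can carry a fixed amount of extra length in oscillations of vanishing amplitude (shrinking spikes of bounded total length), so that its uniform limit is strictly shorter. Ruling this out is precisely where the length term built into $d_F$ must be used. The approach I would pursue is to exploit that the $g_n$ are constant speed, with $|g_n'|=\ell(h_n)/2\pi$ almost everywhere, and to combine this with the \emph{full} Cauchy hypothesis (not merely subsequential convergence) to force $\ell(g)=L$ rather than $\ell(g)<L$. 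Making this implication rigorous---showing that the equi-Lipschitz constant-speed structure survives the uniform limit strongly enough to pin down $\ell(g)=L$ and thereby exclude oscillatory loss within a $d_F$-Cauchy family---is the delicate point on which completeness rests, and is the part I would expect to require the most care.
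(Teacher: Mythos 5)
Your reduction to an equi-Lipschitz family by constant-speed reparametrization, followed by Arzel\`a--Ascoli inside the bounded set of points within distance $C$ of $W$, is exactly the paper's argument --- the paper's proof consists of precisely those two steps and nothing else. Where you go beyond the paper is in isolating the length term $|\ell(h_{n_k})-\ell(g)|$, which the paper passes over in silence; you are right that, with $d_F$ as defined, this is where the entire weight of the statement sits.

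Unfortunately the closing step you hope for --- using the constant-speed structure and the full Cauchy hypothesis to force $\ell(g)=L$ --- cannot be carried out, because the statement fails exactly there. Take $W$ to be the unit circle, let $g$ parametrize the unit circle, and let $h_n$ be the unit circle decorated with $n$ thin radial spikes of height $1/(2n)$, so that $\ell(h_n)=2\pi+1$ for all $n$. Syncing angular positions shows the Fr\'echet part of $d_F(h_n,h_m)$ is at most $1/(2n)+1/(2m)$, and the length terms vanish identically, so $\{h_n\}$ is $d_F$-Cauchy; its constant-speed reparametrizations converge uniformly to $g$, with $\ell(g)=2\pi<L=2\pi+1$, which is precisely the oscillatory loss you anticipated. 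No $h\in\mathscr{R}(S^1,W)$ can serve as the limit: $d_F(h_n,h)\to0$ forces $\ell(h)=2\pi+1$, while the triangle inequality for the Fr\'echet part gives Fr\'echet distance $0$ between $h$ and $g$, so composing near-optimal reparametrizations exhibits reparametrizations of $h$ converging uniformly to $g$ and vice versa, and lower semicontinuity of length in both directions forces $\ell(h)=\ell(g)=2\pi$, a contradiction. So the constant-speed Lipschitz bound $|g_n'|=\ell(h_n)/2\pi$ only passes to the limit as an inequality $|g'|\le L/2\pi$ (the derivatives converge weakly, not strongly), and the gap you flagged is genuine and not closable as stated: one must either drop the length term from $d_F$ (after which the Arzel\`a--Ascoli argument does give completeness of the set of curves of length $\le C$ meeting $W$), restrict to a subclass on which length is continuous, or pass to an abstract completion. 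The paper's proof does not address this point at all.
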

 \begin{proof}
Let $\{\phi_i\}\subset\mathscr{R}(S^1,W)$ be a Cauchy sequence. $\phi_i$ are rectifiable curves of bounded length and so there exists $\{\sigma_i\}$ Lipschitz parameterizations with bounded Lipschitz constants, such that
$\{\phi_i(\sigma_i(t))\}$ are uniformly Lipschitz. Then completeness follows from
the fact that all curves of $\mathscr{R}(S^1,W)$ are contained within some large compact subset of $\mathbb{C}.$
 \end{proof} 

The connectivity of $\mathfrak{J}_g^{1}$ was not known previously. In fact, very little is known on the topological and geometrical structure of $\mathfrak{J}^1_g$ in general. In this direction we prove our next proposition.

 \begin{prop}\label{H}
Every connected component of $\mathfrak{J}_g^{1}$ contains a point in $\mathfrak{J}_{g,o}\cap\mathfrak{J}_g^{1}.$
\end{prop}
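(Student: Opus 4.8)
The plan is to deduce Proposition~\ref{H} from the stronger path-connectivity statement announced as Lemma~\ref{path-connect}: that every $\G\in\mathfrak{J}_g^1$ can be joined, by a continuous path lying entirely inside $\mathfrak{J}_g^1$, to a classical Schottky group, along which the Hausdorff dimension is nonincreasing. Granting this, the proposition is immediate. Fix any connected component $\mathcal{C}$ of $\mathfrak{J}_g^1$ and any $\G\in\mathcal{C}$; the path $\{\G_t\}_{t\in[0,1]}$ stays in $\mathfrak{J}_g^1$, hence in $\mathcal{C}$ by connectedness, and terminates at a classical group, which therefore lies in $\mathcal{C}\cap(\mathfrak{J}_{g,o}\cap\mathfrak{J}_g^1)$. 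So the entire content is the construction of the path.

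To build the path I would deform $\G$ by expanding its generators. Writing $\G=\langle\g_1,\dots,\g_g\rangle$ with each $\g_i$ loxodromic, I normalize $\g_i$ by its pair of fixed points together with its multiplier $\kappa_i$, $|\kappa_i|>1$, and define $\g_i^{(t)}$ to have the \emph{same} fixed points as $\g_i$ but multiplier of modulus increasing continuously in $t$, for instance $\kappa_i\mapsto\kappa_i\,e^{t s_0}$ for a constant $s_0>0$ fixed below. This is a continuous path $\G_t=\langle\g_1^{(t)},\dots,\g_g^{(t)}\rangle$ in the $3g-3$ complex parameters charting $\mathfrak{J}_g$. As the multipliers grow, the isometric circles of $\g_i^{(t)}$ and $(\g_i^{(t)})^{-1}$ shrink about the corresponding fixed points, so the disks $D_i,D_{i+g}$ can be taken with pairwise disjoint closures throughout; hence the Klein combination criterion stays satisfied, $\G_t$ remains discrete and freely generated, and $\G_t\in\mathfrak{J}_g$ for all $t$.

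The heart of the argument is monotonicity of the Hausdorff dimension along $\{\G_t\}$. Here I would use Bowen's description of $\mathfrak{D}_{\G_t}$ as the unique zero of the pressure function built from the contraction rates of the maps $\g_i^{(t)}$ on the limit set; this is the same pressure and Poincar\'e series input underlying Theorem~\ref{Bowen}, and it is available because $\mathfrak{D}_\G<1$ keeps the relevant series convergent. Expanding the multipliers strictly strengthens the contraction of the inverse branches on the disks $D_i$, which lowers the value of the pressure function at every $s$ and hence lowers its zero, so $t\mapsto\mathfrak{D}_{\G_t}$ is nonincreasing and the path never leaves $\mathfrak{J}_g^1$. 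As the multipliers grow the contraction ratios tend to $0$ uniformly, so $\mathfrak{D}_{\G_t}\to 0$; choosing $s_0$ large enough we arrange $\mathfrak{D}_{\G_1}<\lambda$, where $\lambda$ is the constant of Theorem~\ref{HouY}. By that theorem $\G_1$ is a classical Schottky group, supplying the required endpoint in $\mathfrak{J}_{g,o}\cap\mathfrak{J}_g^1$.

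I expect the main obstacle to be making the monotonicity rigorous and uniform: one must control the variation of the limit set $\Lambda_{\G_t}$ together with the derivatives $(\g_i^{(t)})'$ on it, verify that the pressure genuinely decreases rather than merely each individual contraction ratio, and confirm that the deformation is a genuine continuous path in the Schottky chart that never degenerates. The hypothesis $\mathfrak{D}_\G<1$ enters precisely to guarantee convergence of the Poincar\'e series throughout the deformation, so that Bowen's dimension formula applies uniformly and the pressure comparison is valid; combined with Theorem~\ref{HouY} to identify the classical endpoint, this yields Lemma~\ref{path-connect} and hence the proposition.
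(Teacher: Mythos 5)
Your reduction of the proposition to the path statement of Lemma~\ref{path-connect}, and your final step of driving the dimension below the threshold $\lambda$ of Theorem~\ref{HouY} so that the endpoint is recognized as classical, coincide with the paper. The construction of the path itself, however, is different from the paper's and has two genuine gaps. First, you do not show that $\G_t$ remains in $\mathfrak{J}_g$ for intermediate $t$. The isometric-circle/Klein-combination criterion you invoke is only available once the multipliers are large; for small and moderate $t$ neither the original defining Jordan curves (which are paired by $\g_i$, not by $\g_i^{(t)}$) nor the isometric circles need bound a fundamental domain, and nothing in your argument rules out the ray $\{\G_t\}$ leaving the discrete, free, convex cocompact locus before the large-multiplier regime is reached. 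Second, the monotonicity of $t\mapsto\mathfrak{D}_{\G_t}$ is asserted rather than proved: the derivative of a long word $\g_{i_1}^{(t)}\cdots\g_{i_k}^{(t)}$ at a point of the moving limit set is not a monotone function of the individual multipliers, so ``each generator contracts more'' does not by itself lower the pressure at every $s$. You flag this as the main obstacle yourself, but it is the entire content of the lemma, not a technicality to be deferred.

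The paper's construction is designed to avoid exactly these two issues. It deforms $\G$ by a quasiconformal map: it builds maps $f_{\epsilon_1,k}$ that agree with a small quasiconformal deformation outside depth-$k$ disks, conformally shrink the disks $\mathcal{D}_{i_j}$ at depth $k$, and interpolate quasiconformally on annuli; it then uses removability of $\Lambda_\G$ for quasiconformal maps (valid because $\mathfrak{D}_\G<1$) together with uniform bounds on the dilatations to obtain a quasiconformal limit $\tilde f_{\epsilon_1}$, and sets $\G_s=\tilde f_s\G\tilde f_s^{-1}$. Conjugation by a global quasiconformal homeomorphism keeps $\G_s$ discrete, free and convex cocompact for every $s$, so the path never leaves Schottky space, and the inclusion $\tilde f_{\epsilon_1}(\Lambda_\G)\subset\Lambda_\G$ gives $\mathfrak{D}_{\G_s}\le\mathfrak{D}_\G$ by monotonicity of Hausdorff dimension under inclusion, with no pressure estimate required. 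To salvage your multiplier deformation you would need, at a minimum, a proof that the whole ray $\{\G_t\}_{t\ge 0}$ stays in the discrete locus and a genuine pressure comparison on the moving limit sets; both are substantial and neither is supplied.
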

The proof of Proposition \ref{H} will follow from Lemma \ref{path-connect}, a much stronger statement. We shall construct and shows the existence of a path in $\mathfrak{J}^1_g$ such that it is non-increasing in Hausdorff dimensions.

\begin{lem}\label{path-connect} Let $\G$ a Schottky group with $\mathfrak{D}_{\G}<1.$
There exists a path $\{\G_{t}\}\subset\mathfrak{J}^1_g, t\in[0,1]$ in $\mathfrak{J}^1_g$ such that,
$\G_1$ is classical Schottky group. 
\end{lem}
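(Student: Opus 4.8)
The plan is to build the path $\{\G_t\}$ by continuously deforming the generating data of $\G$—its fundamental domain $\mathcal{F}$ and associated rectifiable quasi-circle—so that at $t=1$ the boundary curves $\partial\mathcal{F}$ become genuine circles while keeping the group Schottky and the Hausdorff dimension below $1$ throughout. Since $\mathfrak{D}_\G<1$, by Theorem \ref{Bowen} there is a rectifiable quasi-circle $\eta_\G$, generated by some rectifiable $\zeta$ (Proposition \ref{marking}). The key leverage comes from Theorem \ref{HouY}: there is a universal $\lambda>0$ below which \emph{every} Kleinian group is automatically classical Schottky. So the real goal is a path that \emph{decreases} $\mathfrak{D}_{\G_t}$ monotonically until it drops below $\lambda$; once $\mathfrak{D}_{\G_t}<\lambda$, the group is classical for free, and I can terminate (or trivially continue) the path there.

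First I would parametrize the deformation. The natural moduli coordinates on $\mathfrak{J}_g$ are the fixed points and multipliers of the generators $\{\g_i\}$; equivalently, I can track the isometric circles of the $\g_i^{\pm}$. The plan is to scale the multipliers $|\lambda_i|\to 0$ (pushing the loxodromic elements toward parabolic-free, strongly contracting maps) along a path $t\mapsto\G_t$. Shrinking the translation lengths shrinks the isometric circles, which both separates the domain boundaries (keeping us inside $\mathfrak{J}_g$) and forces the Poincaré series exponent—hence $\mathfrak{D}_{\G_t}$—to decrease. I would verify monotone non-increase of $\mathfrak{D}_{\G_t}$ via the Bowen-type characterization: using Proposition \ref{rec}, the quasi-circle stays rectifiable along the path because $\sum_k|\g'_{k,t}|$ is dominated by its value at $t=0$ once the multipliers only shrink, so we never leave $\mathfrak{J}_g^1$. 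Continuity of $t\mapsto\G_t$ in $\mathfrak{J}_g^1$ follows from continuity of the quasi-circle in the generating curve with respect to $d_F$, as established just before Proposition \ref{curves-space}.

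Once the path has driven $\mathfrak{D}_{\G_t}$ below the universal constant $\lambda$ of Theorem \ref{HouY}, the endpoint group is classical Schottky, giving $\G_1$ as required. Concretely, I would argue that the scaling deformation above always achieves $\mathfrak{D}_{\G_t}\to 0$ as $t\to 1$ (the isometric circles degenerate to points, so the limit set thins out), so that for some $t_0<1$ we have $\mathfrak{D}_{\G_{t_0}}<\lambda$, and then reparametrize $[t_0,1]\to[0,1]$.

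\emph{The hard part} will be showing that the multiplier-shrinking path stays inside Schottky space $\mathfrak{J}_g$ for \emph{all} $t$ and that $\mathfrak{D}_{\G_t}$ is genuinely non-increasing, rather than merely eventually small. Discreteness and the Schottky (free, convex-cocompact) property are not automatic under an arbitrary deformation of fixed points and multipliers; I would need to keep the $2g$ bounding Jordan curves disjoint throughout, which constrains how the isometric circles may move. Monotonicity of Hausdorff dimension under such a deformation is the subtle analytic point—$\mathfrak{D}$ is real-analytic on $\mathfrak{J}_g$ but need not be monotone along an arbitrary path—so the deformation must be chosen so that each generator's contraction strictly improves, and I expect this to require a careful choice of path (e.g. simultaneously contracting all multipliers radially toward $0$ while fixing the axes) together with an estimate bounding $\sum_k|\g'_{k,t}|$ by a decreasing function of $t$. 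Establishing that estimate, and hence both rectifiability (via Proposition \ref{rec}) and the dimension bound simultaneously, is where the main work lies.
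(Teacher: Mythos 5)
Your endgame coincides with the paper's: push $\mathfrak{D}_{\G_t}$ below the universal $\lambda$ of Theorem \ref{HouY} and let that theorem supply classicality at the endpoint. But the deformation you propose --- fixing the axes and scaling the multipliers --- leaves unproved exactly the two assertions that constitute the lemma, and you concede as much in your final paragraph. First, for a \emph{non-classical} Schottky group there is no configuration of disjoint isometric circles to begin with, so ``shrinking the isometric circles separates the domain boundaries'' has no starting point; you would need to exhibit disjoint paired Jordan curves for every intermediate group $\G_t$, and nothing in the proposal does this. (Note also that translation length and isometric-circle radius move in opposite directions: sending the multipliers to $0$, i.e.\ making the generators strongly contracting, \emph{increases} translation lengths, while near-parabolic elements have large isometric circles --- so the heuristic as stated is backwards.) Second, the claim that $\sum_k|\g'_{k,t}(z)|$ is dominated by its value at $t=0$ when the multipliers only shrink is not justified: the derivative of a long word $\g_{i_1}\cdots\g_{i_l}$ at $z$ depends on where the intermediate images of $z$ sit relative to the isometric circles, and changing multipliers while fixing axes moves those intermediate images, so termwise monotonicity of the Poincar\'e series does not follow. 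Without that estimate you get neither $\mathfrak{D}_{\G_t}<1$ along the path nor $\mathfrak{D}_{\G_t}\to 0$.

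The paper's proof sidesteps both difficulties by deforming with quasiconformal maps rather than with moduli coordinates. It builds a sequence $f_{\epsilon_1,k}$ equal to a fixed quasiconformal map outside the level-$k$ nested disks $\mathcal{D}_{i_j}$ and equal to a conformal shrinking $\phi^{\epsilon_1}_{i_j}$ of each $\mathcal{D}_{i_j}$ into itself, glued across annuli with uniformly bounded dilatation. The hypothesis $\mathfrak{D}_\G<1$ enters by making $\Lambda_\G=\cap_k\cup_{|j|=k}\mathcal{D}_{i_j}$ removable for quasiconformal maps (V\"ais\"al\"a, Theorem 35.1), so the limit $\tilde f_{\epsilon_1}=\lim_k f_{\epsilon_1,k}$ is quasiconformal on all of $\mathbb{C}$; then $\G_s=\tilde f_s\G\tilde f_s^{-1}$ is automatically a Schottky group for every $s$ (no disjoint-curve bookkeeping is needed), and $\tilde f_{\epsilon_1}(\Lambda_\G)\subset\Lambda_\G$ gives $\mathfrak{D}_{\G_s}\le\mathfrak{D}_\G$ with no Poincar\'e-series monotonicity argument at all. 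Iterating the construction shrinks the nested disks to points, so $\mathfrak{D}_{\G_n}\to 0$ and Theorem \ref{HouY} applies. To salvage your route you would have to supply the disjoint Jordan curve systems for all intermediate groups and an actual proof of the dimension bound along the path; the quasiconformal construction is precisely what replaces both.
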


The construction of $\{\G_{t}\}$ relies on the condition of $\mathfrak{D}_{\G}<1$ and the use of Theorem \ref{HouY} as we show in the following.
\begin{proof}
Let $B(\G)$ be the space of unit ball Beltrami differentials of $\G$. For $\mu\in B(\G)$ and we set $\mu^{\epsilon_1}=\epsilon_1\mu$ for small $\epsilon_1>0$, and 
let $f_{\epsilon_1}$ be the corresponding  quasi-conformal map. We will next construct new quasi-conformal $\tilde{f}_{\epsilon_1}$ based on 
$f_{\epsilon_1}.$ Note that we take $f_0$ to be the identity.
\par
Let $\mathcal{F}_\G$ Schottky domain bounded by disjoint closed Jordan curves $\{\mathcal{C}_i\}_1^{2g}$ generated by $\{\g_i\}_1^{2g}.$ Denote by $\mathcal{D}_i$ the closed topological disk bounded by $\mathcal{C}_i=\partial\mathcal{D}_i.$ Note that we also have some small disjoint open neighborhoods $\mathcal{D}_i\subset U_i.$ 
Let $\mathcal{D}_{i_j}$ be the topological disk bounded by image of $\mathcal{C}_i$ under admissible string of $\g_j$, and similarly $U_{i_j}$ denotes images of $U_i$. Here we say string $\g_{i_1}...\g_{i_l}...\g_{i_j}$ is admissible for $\mathcal{D}_i$ if $i_j\not=i$, and $|i_l-i_{l+1}|\not=g$. 
Set $V_{i_j}=U_{i_j}\backslash\mathcal{D}_{i_j}$. Then the limit set $\Lambda_\G$ is given by 
$\cap_{k=1}^\infty\cup_{|j|=k}\mathcal{D}_{i_j}$ for admissible string $i_j\in\{1,...,2g\}^k.$
We define sequence of quasiconformal maps $\{f_{\epsilon_1,k}\}$ as follows. 
Set $f_{\epsilon_1,k}=f_{\epsilon_1}$ on $\mathbb{C}\backslash\cup_{|j|=k}U_{i_j}$. Let $\partial\mathcal{D}^\tau_{i_j}$ be continuous family of Jordan curves which are shrinks of $\partial\mathcal{D}_{i_j}$ to a point into the interior of $\mathcal{D}_{i_j}$ as $\tau\to\infty.$  Let $\phi^\tau_{i_j}$ be conformal maps $\mathcal{D}_{i_j}\rightarrow\mathcal{D}^\tau_{i_j}$. We define $f_{\epsilon_1,k}=\phi^{\epsilon_1}_{i_j}$ on 
$\mathcal{D}_{i_j}$ for 
all $i_j\in\{I,...,2g\}^k.$ Let $\psi^{\epsilon_1}_{i_j}$ be quasiconformal maps on $V_{i_j}$ which connects $f_{\epsilon_1}$ on $\mathbb{C}\backslash\cup_{|j|=k}U_{i_j}$ to $\phi^{\epsilon_1}_{i_j}$ on $\mathcal{D}_{i_j}.$ Note that the existence of $\psi^{\epsilon_1}_{i_j}$ follows from 
the fact that $f_{\epsilon}|_{\partial U_{i_j}}$ and $\phi^{\epsilon_1}_{i_j}|_{\partial\mathcal{D}_{i_j}}$ are quasisymmetric maps, hence we have
quasiconformal extension to annulis $V_{i_j}.$
Finally we set $f_{\epsilon_1,k}=\psi^{\epsilon_1}_{i_j}$ on $V_{i_j}$ for all ${i_j}.$\par
The sequence of quaisconformal maps satisfies the consistence condition $f_{\epsilon_1,k-1}=f_{\epsilon_1,k}$ on $\mathbb{C}\backslash\cup_{|j|=k}U_{i_j}$ where $f_{\epsilon_1,0}$ is defined to be $f_{\epsilon_1}.$ By our construction we have for some $M>0$ such that all dilations of the sequence $f_{\epsilon_1,k}$ are bounded $K_k<M$ i.e. $\|\mu^{\epsilon_1}_k\|_\infty<\rho$ for some $\rho<1$ and all $k$. Now since $\Lambda_\G=\cap_{k=1}^\infty\cup_{|j|=k}\mathcal{D}_{i_j}$ which is of Hausdorff dimension $<1$, it follows from Theorem $35.1$ in \cite{VA}, $\cap_{k=1}^\infty\cup_{|j|=k}\mathcal{D}_{i_j}$ is removable set for quaisconformal maps. Hence it follows from uniform boundedness of dilations we have the limit of 
$\tilde{f}_{\epsilon_1}=\lim f_{\epsilon_1,k}$ is quasiconformal map on $\mathbb{C}.$ \par
In addition, by our construction we have 
$\lim_{l\to\infty}\cap^l_k\cup_{|j|=k}f_{\epsilon_1,k}(\mathcal{D}_{i_j})=\lim_{l\to\infty}\cap^l_k\cup_{|j|=k}\mathcal{D}^{\epsilon_1}_{i_j}
\subset\cap_{k=1}^\infty\cup_{|j|=k}\mathcal{D}_{i_j}$
which implies that,  $\tilde{f}_{\epsilon_1}(\Lambda_\G)\subset\Lambda_\G$. This implies Hausdorff dimension $\mathfrak{D}_{\epsilon_1}$ of 
$\tilde{f}_{\epsilon_1}(\Lambda_\G)$
is $\mathfrak{D}_{\epsilon_1}\le\mathfrak{D}_\G.$\par
We set $\G_1=\tilde{f}_{\epsilon_1}\G\tilde{f}^{-1}_{\epsilon_1}.$ So we have a path $\G_s=\tilde{f}_s\G\tilde{f}^{-1}_s$ for $s\le\epsilon_1$ such that $\mathfrak{D}_{\G_s}\le\mathfrak{D}_\G$ for $s\le\epsilon_1.$
Now we repeat the above construction for $\G_1$ in place of $\G$ to get $\G_2$ and so on. By this process we have constructed sequence of $\{\G_n\}$ and a path of $\G_s$ such that $\mathfrak{D}_{\G_s}\le\mathfrak{D}_{\G}$ for $s\le n$. Since $\mathcal{D}^n_{i_j}$ converge to points as $n\to\infty$, we have 
$\mathfrak{D}_{\G_n}\to 0$, hence by continuity of Hausdorff dimension we have for sufficiently large $s$ such that $\mathfrak{D}_{\G_s}<\lambda$ 
where $\lambda>0$ is given by Theorem \ref{HouY} .  Hence it follows from Theorem \ref{HouY}, we have path $\G_s\subset\mathfrak{J}^1_g$ such that $\G_t\in\mathfrak{J}_{g,o}$ for some large $t$.

\end{proof}

\section{Schottky Space and rectifiable curves}
Let $\{\G_n\}$ be a sequence of classical Schottky groups such that $\G_n\to\G.$ Denote $QC(\mathbb{C})$ space of quasiconformal maps on $\mathbb{C}$. It follows from quasiconformal deformation theory of Schottky space, for
$\G_c$ classical Schottky group we can write ,
\[\mathfrak{J}_g=\{f\circ\gamma\circ f^{-1}\in\text{PSL}(2,\mathbb{C})|  f\in QC(\mathbb{C}), \gamma\in\Gamma_c\}/\mbox{PSL}(2,\mathbb{C}).\]

\noindent{\bf Notations:} 
Set $\mathfrak{H}$ to be the collection of all classical Schottky groups of Hausdorff dimension $\le\lambda$ for some $\lambda<1.$
\\

\par
Note that there exists a sequence of quasiconformal maps $f_n$ and $f$ of $\mathbb{C}$ such that, we can write
 $\G_n=f_n(\G)$ and $\G=f(\G_c).$ Here we write $f(\G):=\{f \circ g\circ f^{-1}| g\in \G\}$ for a given Kleinian group $\G$ and quasiconformal map $f.$\par
 
 Schottky space $\mathfrak{J}_g$ can also be considered as subspace of $\mathbb{C}^{3g-3}.$ This provides $\mathfrak{J}_g$ analytic structure as 
 $3g-3$-dimensional complex analytic manifold. 
 
 \begin{prop}\label{domain}
 Let $\{\G_n\}$ be a sequence of Schottky groups with $\G_n\to\G$ to a Schottky group $\G.$ Let $\mathcal{F}$ be a fundamental domain of $\G.$ There exists
 a sequence of fundamental domain $\{\mathcal{F}_n\}$ of $\G_n$ such that $\mathcal{F}_n\to\mathcal{F}.$ 
 \end{prop}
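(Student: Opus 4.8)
The plan is to realize the convergence $\G_n\to\G$ at the level of generators and then transport the defining curve system of $\mathcal{F}$ along the converging generators. Since $\mathfrak{J}_g$ is charted, after normalization, by the matrix entries of a free generating set up to conjugacy by $\text{PSL}(2,\mathbb{C})$, convergence in $\mathfrak{J}_g$ permits a choice of generators $\{\g^{(n)}_i\}_1^g$ of $\G_n$ with $\g^{(n)}_i\to\g_i$ in $\text{PSL}(2,\mathbb{C})$, where $\{\g_i\}_1^g$ freely generate $\G$. By the standing convention for Schottky fundamental domains, $\mathcal{F}$ is bounded by $2g$ disjoint Jordan curves $\{c_i\}_1^{2g}$ with pairwise disjoint closed disks $\bar D_i$ and with pairing $\g_i(c_i)=c_{i+g}$ and $\g_i(\overline{\mathbb{C}}\setminus D^o_i)=D_{i+g}$ for $1\le i\le g$.

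The construction I propose is to freeze the first half of the curve system and let the generators carry over the second half. Concretely, set $c^{(n)}_i=c_i$ for $1\le i\le g$ and define $c^{(n)}_{i+g}=\g^{(n)}_i(c_i)$. A M\"obius transformation that converges in $\text{PSL}(2,\mathbb{C})$ converges uniformly on any compact set avoiding its pole; since for large $n$ the pole of $\g^{(n)}_i$ stays uniformly away from the compact curve $c_i$, I obtain $c^{(n)}_{i+g}\to c_{i+g}$ uniformly, hence $\bar D^{(n)}_{i+g}\to\bar D_{i+g}$ in the Hausdorff metric, and the pairing $\g^{(n)}_i(c^{(n)}_i)=c^{(n)}_{i+g}$ holds by construction. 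It then remains to verify that for large $n$ the perturbed system genuinely bounds a fundamental domain of $\G_n$, which amounts to checking disjointness of the disks together with the correct side-pairing. Disjointness is immediate: the limiting closed disks $\bar D_1,\ldots,\bar D_{2g}$ are pairwise disjoint compact sets, disjointness of compact sets is an open condition, and the perturbation $\bar D^{(n)}_{i+g}\to\bar D_{i+g}$ is uniform, so for all large $n$ the system $\{\bar D_1,\ldots,\bar D_g,\bar D^{(n)}_{g+1},\ldots,\bar D^{(n)}_{2g}\}$ is pairwise disjoint. For the orientation, $\g_i$ sends $\overline{\mathbb{C}}\setminus D^o_i$ onto $D_{i+g}$; since $\g^{(n)}_i\to\g_i$ and $c^{(n)}_{i+g}\to c_{i+g}$, continuity forces $\g^{(n)}_i$ to send $\overline{\mathbb{C}}\setminus D^o_i$ onto $D^{(n)}_{i+g}$ for large $n$. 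Hence for large $n$ the curves $\{c^{(n)}_i\}_1^{2g}$ form a Schottky configuration for the free generators $\g^{(n)}_i$, and by the ping--pong (Klein combination) characterization of such configurations the exterior region $\mathcal{F}_n=\overline{\mathbb{C}}\setminus\cup_i D^{(n),o}_i$ is a fundamental domain of $\G_n$. Discarding the finitely many initial indices where the configuration may degenerate does not affect the tail, and since $c^{(n)}_i\to c_i$ for every $i$ (with equality for $i\le g$), the boundary curves converge in $d_F$ and therefore $\mathcal{F}_n\to\mathcal{F}$.

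The step I expect to be the main obstacle is the verification that the side-pairing and disjointness conditions survive the perturbation, i.e. that the transported curves bound an \emph{actual} fundamental domain rather than merely satisfying the formal pairing $\g^{(n)}_i(c^{(n)}_i)=c^{(n)}_{i+g}$; this is the only place where the topology of the configuration enters essentially, and it is handled by the openness of disjointness together with the continuity of the ``which side maps to which'' assignment under $\g^{(n)}_i\to\g_i$. I note that the same conclusion can be reached through the quasiconformal framework already set up in this section: writing $\G_n=f_n(\G)$ with conjugating maps $f_n\to\mathrm{id}$ uniformly on compacta (as furnished by the deformation theory of $\mathfrak{J}_g$), one simply takes $\mathcal{F}_n=f_n(\mathcal{F})$, which is a fundamental domain of $\G_n$ since conjugation carries the $\G$-tiling of $\Omega_\G$ to the $\G_n$-tiling of $\Omega_{\G_n}$, and $f_n\to\mathrm{id}$ yields $\mathcal{F}_n\to\mathcal{F}$. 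The direct curve construction above is preferable here because it keeps explicit control of the boundary curves of $\mathcal{F}_n$, which is what is needed for the singularity analysis in the next section.
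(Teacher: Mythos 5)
Your proposal is correct, but your primary argument takes a genuinely different route from the paper's. The paper's proof is precisely the one-line quasiconformal conjugation you relegate to a closing remark: writing $\G_n=f_n^{-1}(\G)$ for the conjugating quasiconformal maps furnished by the deformation theory of $\mathfrak{J}_g$, it sets $\partial\mathcal{F}_n=f_n^{-1}(\partial\mathcal{F})$, notes that a conjugating homeomorphism carries a fundamental domain to a fundamental domain of the conjugate group, and concludes $\mathcal{F}_n\to\mathcal{F}$. Your main construction --- freezing $c_1,\dots,c_g$, transporting $c_{g+1},\dots,c_{2g}$ by the converging generators $\g^{(n)}_i\to\g_i$, re-verifying disjointness of the closed disks and the side-pairing by openness and continuity, and then invoking Klein combination/ping-pong --- is longer but buys explicit control of the boundary curves, which is indeed what the singularity analysis of Lemma \ref{singular} consumes; the quasiconformal route is shorter but silently uses that $f_n\to\mathrm{id}$ uniformly, a point the paper does not justify either. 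Two cosmetic caveats on your version: the statement asks for a fundamental domain for \emph{every} $n$, so for the finitely many initial indices where the perturbed configuration may fail to be disjoint you should simply substitute an arbitrary Schottky domain of $\G_n$ (only the tail matters for convergence); and your argument presumes, consistently with the paper's conventions, that the given $\mathcal{F}$ is a Schottky domain whose side-pairings are expressible in the marked generators realizing the convergence $\G_n\to\G$.
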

 
 \begin{proof}
 Let $\cup_{i=1}^{2g}\mathcal{C}_i=\partial\mathcal{F}$ be the Jordan curves which is the boundary of $\mathcal{F}.$ 
 Set $\mathcal{C}_n=f^{-1}_n(\cup_{i=1}^{2g}\mathcal{C}_i).$ Then $\mathcal{C}_n$ is the boundary of a fundamental domain of $f^{-1}_n(\G).$
 Hence we have a fundamental domain $\mathcal{F}_n$ of $\G_n$ defined by $\mathcal{C}_n$ with $\mathcal{F}_n\to\mathcal{F}.$
 \end{proof}

 \begin{lem}\label{sequence}
Suppose $f_n\to f$ and $\G=f(\G_c).$ Every quasi-circle with bounded length of $\G_n=f_n(\G_c)$ is in $\mathscr{R}(S^1,W)$ for some compact $W.$
 \end{lem}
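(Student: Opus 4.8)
The plan is to exhibit a \emph{single} compact set $W\subset\mathbb{C}$ that every limit set $\Lambda_{\G_n}$ meets, and then to observe that any quasi-circle of $\G_n$ contains $\Lambda_{\G_n}$ by definition, so it automatically intersects $W$; together with the bounded-length hypothesis this places it in $\mathscr{R}(S^1,W)$. Thus the content of the lemma is the \emph{uniform} boundedness of the limit sets $\Lambda_{\G_n}$, after which membership in $\mathscr{R}(S^1,W)$ is immediate.

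First I would use that each $f_n$ is a quasiconformal conjugacy carrying the dynamics of $\G_c$ to that of $\G_n=f_n(\G_c)$. Being a homeomorphism of $\overline{\mathbb{C}}$ that conjugates the group action, $f_n$ maps limit points to limit points, so $\Lambda_{\G_n}=f_n(\Lambda_{\G_c})$. Here $\Lambda_{\G_c}$ is a \emph{fixed} compact subset of $\mathbb{C}$, independent of $n$, once we normalize the Schottky chart so that $\infty$ lies in the region of discontinuity of $\G_c$ (and hence, by the same normalization, away from all the $\Lambda_{\G_n}$).

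Next I would invoke the convergence $f_n\to f$. For quasiconformal maps with uniformly bounded dilatation, normalized as in the chart on $\mathfrak{J}_g$, this convergence is uniform on compact subsets of $\mathbb{C}$. Restricting to the compact set $\Lambda_{\G_c}$, the images $f_n(\Lambda_{\G_c})$ converge in the Hausdorff metric to $f(\Lambda_{\G_c})=\Lambda_\G$. Consequently the set $\Lambda_\G\cup\bigcup_n f_n(\Lambda_{\G_c})$ is bounded, and I take $W$ to be a closed disk containing it together with a fixed margin, so that $\Lambda_{\G_n}\subset W$ for every $n$. Finally, since any quasi-circle $\eta_{\G_n}$ of $\G_n$ is a $\G_n$-invariant Jordan curve containing $\Lambda_{\G_n}$, it meets $W$ (indeed it contains points of $\Lambda_{\G_n}\subset W$); if moreover $\eta_{\G_n}$ has bounded length, then $\eta_{\G_n}\in\mathscr{R}(S^1,W)$ directly from the definition.

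The main obstacle will be justifying the mode of convergence $f_n\to f$ and the resulting uniform bound on $f_n(\Lambda_{\G_c})$. This is exactly where the normalization of the Schottky chart and the uniform bound on the dilatations of the $f_n$ must be used, since these are what make the family $\{f_n\}$ equicontinuous on compacta and keep $\infty$ uniformly away from all the limit sets, guaranteeing that every curve in question genuinely lives in a fixed bounded region of $\mathbb{C}$ rather than escaping toward $\infty$.
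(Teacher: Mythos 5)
Your proposal is correct and follows essentially the same route as the paper: both arguments reduce the lemma to the uniform boundedness of the limit sets $\Lambda_{\G_n}$, take $W$ to be a compact set containing $\bigcup_n\Lambda_{\G_n}\cup\Lambda_\G$, and then note that any bounded-length quasi-circle of $\G_n$ contains $\Lambda_{\G_n}$ and hence meets $W$. The only difference is that you justify the convergence $\Lambda_{\G_n}\to\Lambda_\G$ explicitly via $\Lambda_{\G_n}=f_n(\Lambda_{\G_c})$ and locally uniform convergence of the normalized quasiconformal maps, a step the paper simply asserts.
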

 \begin{proof}
 Let $\eta_n$ be a quasi-circle of $\G_n.$ Note  $\Lambda_{\G_n}\subset\eta_n.$ Since limit set $\Lambda_{\G_n}\to\Lambda_{\G},$  and limit set $\Lambda(\G)$
 is compact, and $\eta_n$ is rectifiable, we can find some compact set $W\supset \cup_n\Lambda_{\G_n}\cup\Lambda_{\G}.$ 
 \end{proof}
 
Given $\{\G_n\}\subset\mathfrak{H}$ with $\G_n\to\G$, 
let $E_{\G_n}$ denote the collection of all bounded length quasi-circles of $\G_n.$ We define $\overline{\cup E_{\G_n}}$ to be the closure of $\cup E_{\G_n}$. 
Note that one can always get a non-simple closed curves given by non-simple generating curves, hence we need to remove all these \emph{trivial} singular curves in $\overline{\cup E_{\G_n}}$, i.e. curves that is generated by some non-simple generating curves, which we call them \emph{trivial singular curves}. 
Let $\mathscr{E}$ denote the collection of all trivial singular curves in $\eta\in\partial{\overline{\cup E_{\G_n}}}$, i.e. curves which have a singularity (non-simple)\emph{generating curve} of $\eta.$ We define $\Psi_\G=\partial\overline{\cup E_{\G_n}}-\mathscr{E}.$
\par

We define $\mathscr{O}(\eta)$, open sets about $\eta\in\Psi_\G$ in relative topology given by $O(\eta)\cap\Psi_\G$ for some open set $O(\eta)\subset\mathscr{R}(S^1,W).$ \par

Let $\G$ be a Schottky group. Let $\mathcal{F}$ be a fundamental domain of $\G.$ For $\mathscr{O}(\eta_\G)$ of $\eta_\G\in\Psi_\G$, and suppose every element 
is quasi-circle, and let $\zeta_\xi$ denote a generating curve of  $\xi\in\mathscr{O}(\eta_\G)$ with respect to $\mathcal{F}.$ Then we have, $\mathscr{O}(\zeta)=\cup_{\xi\in\mathscr{O}(\eta_\G)}\zeta_\xi$ the collection of all
generating curves of the open set $\mathscr{O}(\eta_\G)$ gives a open set of generating curves of $\eta_\G.$ On set of collection of all generating curves of elements of $\Psi_\G$, we define the topology as $\xi_{\eta_n}\to\xi_\eta$, if and only if $\eta_n\to\eta$ for $\eta_n,\eta\in\Psi_\G.$
\par
\begin{rem}
 We will sometime denote by $\eta_\infty\in\partial\Psi_\G$ a curve which is the limit of rectifiable quasi-circles of $\{\G_n\}.$
 \end{rem}
 \begin{prop}\label{seq-curve}
Let $\{\G_n\}\subset\mathfrak{H}$ with $\G_n\to\G.$ 
Then every bounded length quasi-circle $\eta_\G$ of $\G$ is in $\Psi_\G.$ In addition, if $\eta_n$ are linear then $\eta_\G$ is  linear quasi-circles of $\G.$  \end{prop}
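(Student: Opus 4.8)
The plan is to establish the two claims of Proposition~\ref{seq-curve} separately: first that every bounded-length quasi-circle $\eta_\G$ of the limit group $\G$ lies in $\Psi_\G$, and second that linearity is preserved in the limit. For the first claim, I would start from Lemma~\ref{path-connect} and the construction producing $\G$ as a quasiconformal image $f(\G_c)$ with $\G_n = f_n(\G_c)$ and $f_n \to f$. Given a bounded-length quasi-circle $\eta_\G$ with generating curve $\zeta$ (which exists by Proposition~\ref{marking}), the strategy is to transport $\zeta$ back to $\G_c$ via $f^{-1}$, giving a generating curve $\zeta_c = f^{-1}(\zeta)$ relative to a fundamental domain of $\G_c$, and then push it forward by $f_n$ to obtain generating curves $\zeta_n = f_n(\zeta_c)$ for $\G_n$. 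Each $\zeta_n$ generates a quasi-circle $\eta_n$ of $\G_n$, and I would use Proposition~\ref{rec} together with $\mathfrak{D}_{\G_n} < 1$ to guarantee that $\eta_n$ has bounded length whenever $\zeta_n$ is rectifiable, so $\eta_n \in E_{\G_n}$.

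The heart of the first claim is then a convergence argument: I would show $\eta_n \to \eta_\G$ in the Fr\'echet metric $d_F$ on $\mathscr{R}(S^1,W)$. The uniform compact target $W$ is supplied by Lemma~\ref{sequence}, and completeness of $\mathscr{R}(S^1,W)$ by Proposition~\ref{curves-space}. Since $f_n \to f$ uniformly on the relevant compact set and quasiconformal maps are equicontinuous there, the generating curves converge $\zeta_n \to \zeta$; by the continuity estimate recorded just before Proposition~\ref{curves-space}, namely $d_F(\eta^1_\G,\eta^2_\G) \le M(\inf_{\sigma^1,\sigma^2}|\xi(\sigma^1_k) - \zeta(\sigma^2_k)| + |\ell(\xi) - \ell(\zeta)|)$, convergence of generating curves yields convergence of the full quasi-circles. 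Thus $\eta_\G \in \overline{\cup E_{\G_n}}$. To conclude $\eta_\G \in \Psi_\G = \partial\overline{\cup E_{\G_n}} - \mathscr{E}$, I must verify two things: that $\eta_\G$ is a genuine boundary point (not interior), which follows because $\eta_\G$ is itself a quasi-circle of the limit group and hence approximable but not belonging to the open family of any single $\G_n$, and crucially that $\eta_\G \notin \mathscr{E}$, i.e. $\eta_\G$ is \emph{not} a trivial singular curve. This last point is where I expect the real difficulty: I would argue that since $\eta_\G$ is a bona fide quasi-circle of $\G$ it is a simple closed Jordan curve, so its generating curve $\zeta$ is non-singular (simple), and therefore $\eta_\G$ is excluded from the set $\mathscr{E}$ of curves admitting a non-simple generating curve.

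For the second claim, assuming each $\eta_n$ is linear, I would show the limit $\eta_\G$ is linear. By definition linearity means $\eta_n \backslash \Lambda_{\G_n}$ consists of points, circular arcs, or lines. The key observation is that the generating arcs of $\eta_n$ are circular arcs or line segments, and the space of circular arcs and line segments (parametrized by centre, radius, and endpoints, with lines as the degenerate limit of infinite radius) is closed under Fr\'echet convergence. Hence the limiting generating curve $\zeta$ of $\eta_\G$ consists again of circular arcs or lines, and since $\mathfrak{D}_\G < 1$ gives $\mu^1(\Lambda_\G) = 0$, the complement $\eta_\G \backslash \Lambda_\G$ is composed of these limiting arcs together with at most a measure-zero exceptional set; this establishes linearity of $\eta_\G$.

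\textbf{Main obstacle.} The step I expect to be hardest is ruling out that $\eta_\G$ degenerates into a trivial singular curve in $\mathscr{E}$, since in principle a sequence of simple generating curves can converge to a non-simple (self-touching) limit. Controlling this requires using that $\eta_\G$ is the invariant quasi-circle of the honest Schottky group $\G$ together with the separation of the fundamental domain boundary curves $\{\mathcal{C}_i\}$; a careful argument that the disjointness of the defining disks $\bar D_i \cap \bar D_{i+g} = \emptyset$ persists in the limit (so that the generating arcs cannot collide) is needed, and this is precisely the kind of singularity obstruction analyzed in Lemma~\ref{singular} referenced in the strategy section.
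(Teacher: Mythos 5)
Your proposal follows essentially the same route as the paper: transport a generating curve of $\eta_\G$ (obtained from Proposition \ref{marking}) through the quasiconformal maps relating $\G$ and $\G_n$ to produce generating curves $\zeta_n$ and quasi-circles $\eta_n$ of $\G_n$, use the Poincar\'e-series/Hausdorff-measure estimate (the content of Proposition \ref{rec}) with $\mathfrak{D}_{\G_n}\le\lambda<1$ to get uniformly bounded length, and conclude $\eta_n\to\eta_\G$ in $\mathscr{R}(S^1,W)$, hence $\eta_\G\in\Psi_\G$; the linearity claim is likewise handled by passing linearity of $\zeta_n$ to the limit. Your extra care in checking $\eta_\G\notin\mathscr{E}$ (it is a Jordan curve, so admits a simple generating curve) and your closedness-of-arcs argument for linearity are refinements the paper leaves implicit, but they do not change the structure of the argument.
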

 
\begin{proof}
Let $\G_n=f^{-1}_n(\G).$ Note that since $\mathfrak{D}_{\G_n}\le\lambda$, we have $\mathfrak{D}_{\G}\le\lambda<1.$
Define $\eta_n=f^{-1}_n(\eta_\G),$ for all $n.$ Then $\{\eta_n\}$ is a sequence of Jordan closed curves.
It follows from Proposition \ref{marking}, we have a generating curve $\zeta$ of $\eta_\G.$ So $\eta_\G=\Lambda_\G\cup\cup_{\g\in\G}\g(\zeta),$  and we have
$\eta_n=f^{-1}_n(\Lambda_\G)\cup f^{-1}_n(\cup_{\g\in\G}\g(\zeta)).$ Since $\Lambda_{\G_n}=f^{-1}_n(\Lambda_\G)$ and 
$f^{-1}_n(\cup_{\g\in\G}\g(\zeta))=\cup_{\g\in \G} f^{-1}_n\g f_n (f^{-1}_n(\zeta))$ so its $\cup_{\g_n\in\G_n}\g_n(\zeta_n),$ where $\zeta_n=f^{-1}_n(\zeta).$ Hence
$\zeta_n$ is a generating curve of $\eta_n$ which are quasi-circles of $\G_n$. Denote by $\zeta_\G$ a generating curve for $\eta_\G$. Since $\zeta$
is rectifiable curve, modify $\zeta_n$ if necessary,  we can assume $\{\zeta_n\}$ are rectifiable curves. \par
Let $z\in\zeta_n$, since $\mathfrak{D}_{\G_n}\le\lambda$, we have the $1$-dimension Hausdorff measure $\mu^1(\eta_n)$:
\[\mu^1(\eta_n)\asymp\mu^1(\zeta_n)\sum_{\g\in\G_n}|\g'(z)|<\infty,\]
where $\g'$ is the derivative of $\g.$
Hence $\{\eta_n\}$ are rectifiable quasi-circles. It follows that there exists $c>0$ such that $\mu^1(\eta_n)<c\mu^1(\eta_\G)$ for large $n$, hence 
$\{\eta_n\}$ are bounded quasi-circles of $\mathscr{R}(S^1,W)$,  we have $\eta_n\to \eta_\G$ and  $\eta_\G\in\Psi_\G.$
Finally, if $\eta_n$ are linear then $\zeta_n$ are linear and since Mobius maps preserves linearity, we have $\eta_\G$ is linear. 
\end{proof}

 \begin{defn}[Good-sequences] \label{good}
 For a given sequence $\{\eta_n\}$ of quasi circles of $\{\G_n\}\subset\mathfrak{H}$ with $\G_n\to\G$, we say $\{\eta_n\}$ is a 
 \emph{good-sequence} of quasi-circles if, it is convergent sequence and $\eta_\infty$ is a quasi-circle of $\G.$ We also call $\{\eta_n\}$ (non)transverse good-sequence if all $\eta_n$ are also (non)transverse.
 \end{defn}
 
 \begin{lem}[Existence]\label{good}
 Let $\{\G_n\}\subset\mathfrak{H}$ be a sequence of Schottky groups with $\G_n\to \G.$ There exists a good-sequence 
 $\{\eta_n\}$ of quasi-circles of $\{\G_n\}.$ In addition, if $\{\eta_n\}$ is also (non)transverse then $\eta_\infty$ is (non)transverse.

 \end{lem}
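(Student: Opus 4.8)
```latex
The plan is to construct the good-sequence explicitly by pulling back a fixed
quasi-circle of the limit group $\G$ under the approximating quasiconformal
maps, thereby leveraging the machinery already assembled in
Proposition~\ref{seq-curve}. Writing $\G_n=f^{-1}_n(\G)$ with $f_n\to f$ (as in
the set-up preceding the lemma), I would first fix a generating curve $\zeta$
of some bounded-length quasi-circle $\eta_\G$ of the limit group $\G$; the
existence of such an $\eta_\G$ is guaranteed by Bowen's Theorem~\ref{Bowen}
together with Proposition~\ref{rec}, since $\mathfrak{D}_\G\le\lambda<1$ forces a
rectifiable quasi-circle to exist. Setting $\zeta_n=f^{-1}_n(\zeta)$ and
$\eta_n=\Lambda_{\G_n}\cup\cup_{\g_n\in\G_n}\g_n(\zeta_n)$, the computation in
Proposition~\ref{seq-curve} shows each $\eta_n$ is a rectifiable quasi-circle of
$\G_n$ with uniformly bounded length, so $\{\eta_n\}\subset\mathscr{R}(S^1,W)$
for a common compact $W$ by Lemma~\ref{sequence}.

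The second step is to extract the convergent subsequence and identify its
limit. Since the lengths $\ell(\eta_n)$ are uniformly bounded and all curves lie
in the complete metric space $\mathscr{R}(S^1,W)$ of Proposition~\ref{curves-space},
the sequence admits uniformly Lipschitz parametrizations with a common Lipschitz
bound, exactly as in the completeness argument; by Arzel\`a--Ascoli a
subsequence converges in $d_F$ to some $\eta_\infty\in\mathscr{R}(S^1,W)$.
It remains to check that $\eta_\infty$ is genuinely a quasi-circle of $\G$ and
not a trivial singular curve. For this I would use continuity of the generating
curves under the topology defined on $\Psi_\G$: because $\zeta_n=f^{-1}_n(\zeta)$
and $f_n\to f$, the generating curves $\zeta_n$ converge to a generating curve
of $\G$, and $\G(\zeta_n)\to\G\cdot(\text{limit})$ fills out $\eta_\infty$ with
$\Lambda_\G$ as its closure minus the orbit. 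Thus $\eta_\infty$ is $\G$-invariant,
contains $\Lambda_\G$, and equals $\Lambda_\G\cup\G(\text{limiting generator})$,
making it a quasi-circle of $\G$ and placing it in $\Psi_\G$; this verifies the
good-sequence condition of Definition~\ref{good}.

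For the transversality addendum, I would observe that the construction is
natural with respect to the invariants. If each $\eta_n$ is transverse, then by
definition $\eta_n$ meets $\partial\mathcal{F}_n$ orthogonally and has no
parallel arc, where $\mathcal{F}_n\to\mathcal{F}$ by Proposition~\ref{domain}.
Orthogonality of tangent directions is a closed condition under $d_F$-convergence
once one controls the tangent lines along the generating arcs, and the absence of
parallel arcs passes to the limit because a limiting parallel arc would have to
arise from arcs of $\eta_n$ lying asymptotically in $\partial\mathcal{F}_n$,
contradicting transversality of the $\eta_n$ together with the convergence
$\mathcal{F}_n\to\mathcal{F}$. The analogous closedness argument gives the
non-transverse case. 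The main obstacle I anticipate is precisely this last
step: controlling the \emph{tangential} data (orthogonality and the
parallel/non-parallel dichotomy) in the limit, since $d_F$-convergence is only a
$C^0$ statement about the curves and does not a priori control derivatives.
Overcoming this requires exploiting that the $\eta_n$ are images of a fixed
rectifiable $\zeta$ under M\"obius maps (which preserve linearity and angles) and
that $f_n\to f$ in a sense strong enough to propagate the angle conditions; this
is where the bulk of the technical care must be spent.
```
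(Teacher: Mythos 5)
Your proposal takes essentially the same route as the paper: the paper's entire proof is to set $\eta_n=f_n(\eta_\G)$ for a fixed quasi-circle $\eta_\G$ of the limit group, declare this a good-sequence, and assert that (non)transversality is "obviously preserved." You have simply filled in the details the paper omits (rectifiability via Proposition~\ref{rec} and Proposition~\ref{seq-curve}, convergence in $\mathscr{R}(S^1,W)$, and the closedness of the angle conditions), and your honest flagging of the tangential-data issue in the last step identifies exactly the point the paper glosses over.
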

 
 \begin{proof}
Let $f_n$ be quasi-conformal maps such that $\G_n=f_n(\G).$ Let $\eta_\G$ be a quasi-circle of $\G.$ Then $\eta_n=f_n(\eta_\G)$ is a good-sequence of quasi-circles. The (non)transverse property obviously is preserved. 

 \end{proof}
 
 \begin{cor}[Linear-invariant]\label{linear-invariant}
 Let $\{\eta_n\}$ be a good-sequence of linear quasi-circles of $\{\G_n\}\subset\mathfrak{H}.$  $\eta_\infty$ is a linear quasi-circle of $\G.$ 
 \end{cor}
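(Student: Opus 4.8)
The plan is to reduce the statement to the behavior of a single generating arc and then exploit that the family of generalized circles (circles together with line segments and degenerate points) is preserved both by Möbius maps and by passing to Fr\'echet limits. Since $\{\eta_n\}$ is a good-sequence, its limit $\eta_\infty$ is by definition already a quasi-circle of $\G$, so by Proposition \ref{marking} it admits a generating curve $\zeta_\infty$ with respect to some fundamental domain $\mathcal{F}$ of $\G$. The whole task is therefore to show that $\zeta_\infty$ consists of points, circular arcs and line segments, since linearity of $\eta_\infty$ is equivalent to linearity of a generating curve once the group acts by Möbius maps.

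First I would fix, via Proposition \ref{domain}, a sequence of fundamental domains $\mathcal{F}_n$ of $\G_n$ with $\mathcal{F}_n\to\mathcal{F}$, and realize the generating curves of $\eta_n$ in the canonical way of Proposition \ref{marking}, namely $\zeta_n=(\eta_n\cap\mathcal{F}_n)\cup(\eta_n\cap\partial\mathcal{F}_n)$. Because each $\eta_n$ is linear, after this fundamental-domain realization every component of $\zeta_n$ is a circular arc or a line segment. Each $\zeta_n$ has uniformly bounded length, since $\{\eta_n\}\subset\mathscr{R}(S^1,W)$ for a common compact $W$ by Lemma \ref{sequence}, and hence the arcs of $\zeta_n$ are bounded generalized-circular arcs.

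The key step is to pass to the limit. By the generating-curve topology defined above, in which $\zeta_n\to\zeta_\infty$ if and only if $\eta_n\to\eta_\infty$, the convergence $\eta_n\to\eta_\infty$ gives $\zeta_n\to\zeta_\infty$. Now I would invoke the compactness of the space of generalized circles: a circle or line is cut out by finitely many real parameters, and within the curves of uniformly bounded length this parameter family is closed, with circles permitted to degenerate to straight segments or to points in the limit. Consequently each limiting component of $\zeta_\infty$ lies in this closed family, so $\zeta_\infty$ is a union of points, circular arcs, and line segments. Finally, since $\G\subset\text{PSL}(2,\mathbb{C})$ acts by Möbius transformations and these carry generalized circles to generalized circles, every $\g(\zeta_\infty)$ with $\g\in\G$ is again of this type, so $\eta_\infty\setminus\Lambda_\G=\bigcup_{\g\in\G}\g(\zeta_\infty)$ consists of points, circular arcs and lines. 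This is exactly the definition of linear, and $\eta_\infty$ is therefore a linear quasi-circle of $\G$.

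The main obstacle I anticipate is the limiting control of the generating arcs together with the handling of degenerations: one must rule out that a circular arc of $\zeta_n$ converges to a curve outside the generalized-circle family, and must ensure that the fundamental-domain realizations are chosen compatibly enough that $\zeta_n$ genuinely converges to a \emph{generating} curve of $\eta_\infty$ rather than to a curve that fails to generate $\eta_\infty$. This is precisely where the bounded-length hypothesis (membership in $\mathscr{R}(S^1,W)$, Lemma \ref{sequence}) and the convergence of fundamental domains (Proposition \ref{domain}) are essential, as they supply the uniform length bounds and the converging domains that make the closed-family limiting argument legitimate.
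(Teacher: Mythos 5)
The paper's own ``proof'' of this corollary is the single sentence that linearity is obviously preserved at $\eta_\infty$, so you have in fact supplied strictly more than the paper does: your reduction to generating curves via Proposition \ref{marking} and \ref{domain}, followed by the observation that the family of generalized circles (circles, line segments, points) of uniformly bounded length is closed and is preserved by the M\"obius action of $\G$, is the natural way to make the paper's assertion precise, and it is the same underlying idea (linearity as a closed condition under Fr\'echet limits).

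There is, however, one concrete gap that neither you nor the paper addresses, and which you should close before the argument is complete: the closed-family argument applies component by component, so it requires that the number of circular-arc components of $\zeta_n$ inside $\mathcal{F}_n$ be uniformly bounded in $n$. Without such a bound, a Fr\'echet limit of piecewise-circular curves need not be piecewise circular --- inscribed polygons with $n$ sides converge to an arbitrary smooth convex curve while every individual side is a line segment --- and the total-length bound from Lemma \ref{sequence} does not by itself prevent the combinatorial complexity of $\zeta_n$ from blowing up. The bound does hold in the situation actually used in the paper, where the good-sequence is produced as in Lemma \ref{good} by $\eta_n=f_n(\eta_\G)$ for a single fixed generating curve, so that all $\zeta_n$ share one finite combinatorial pattern; you should either restrict to that case or add the uniform bound on the number of arcs as an explicit hypothesis, after which your limiting argument goes through as written.
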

 
 \begin{proof}
 Linearity is obviously preserved at $\eta_\infty.$
 \end{proof}
 
 \begin{cor}[Tranverse-invariant]\label{transverse}
  Let $\{\eta_n\}$ be a good-sequence of quasi-circles of $\{\G_n\}\subset\mathfrak{H}.$  Then $\eta_\infty$ is a (non)transverse linear quasi-circle of $\G$ if and only if $\{\eta_n\}$ is a (non)transverse. 
 \end{cor}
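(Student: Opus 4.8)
The plan is to reduce the statement to a single incidence condition---the presence or absence of a \emph{parallel arc}---and then track that condition along the good-sequence convergence, supplementing it with the linear structure supplied by Corollary \ref{linear-invariant}. Throughout I use the quasiconformal maps $f_n\to\mathrm{id}$ with $\G_n=f_n(\G)$ and $\eta_n=f_n(\eta_\infty)$, together with the fundamental domains $\mathcal{F}_n=f_n(\mathcal{F})$ furnished by Proposition \ref{domain}, so that $\mathcal{F}_n\to\mathcal{F}$ and $\eta_n\to\eta_\infty$ in $\mathscr{R}(S^1,W)$.

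First I would observe that each $f_n$ is a homeomorphism of the triple $(\mathcal{F},\partial\mathcal{F},\eta_\infty)$ onto $(\mathcal{F}_n,\partial\mathcal{F}_n,\eta_n)$. Consequently an arc of $\eta_\infty$ lies in $\partial\mathcal{F}$ if and only if its image lies in $\partial\mathcal{F}_n$; that is, $\eta_\infty$ possesses a parallel arc exactly when each $\eta_n$ does. This already gives the per-$n$ equivalence of the parallel-arc condition, which is the substantive half of (non)transversality, and it is precisely the content preserved by the preservation asserted in Lemma \ref{good}. Note that orthogonality itself cannot be carried across by $f_n$, since quasiconformal maps distort angles; instead it will be recovered as the normalization built into the clause ``for some $\mathcal{F}_\G$'' in the definition of transversality, which is always available once the parallel-arc status is fixed.

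Next I would pass this incidence condition to and from the limit. Linearity of $\eta_\infty$ is supplied by Corollary \ref{linear-invariant}, so the arcs of $\eta_n$ are circular arcs or line segments and the angle of intersection between an interior arc of $\eta_n$ and $\partial\mathcal{F}_n$ is a continuous function of the finitely many parameters (centres and radii) defining these configurations. Since $\partial\mathcal{F}_n\to\partial\mathcal{F}$ and the arcs converge in $\mathscr{R}(S^1,W)$, Fr\'echet convergence forces these parameters to converge, whence the intersection angles converge and a right angle is preserved in the limit; dually, an arc of $\eta_\infty$ lies along $\partial\mathcal{F}$ if and only if the corresponding arcs of $\eta_n$ lie along $\partial\mathcal{F}_n$ for all large $n$. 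After the orthogonal normalization noted above, this yields that $\eta_\infty$ meets $\partial\mathcal{F}$ orthogonally with no parallel arc exactly when the $\eta_n$ do, i.e. $\eta_\infty$ is (non)transverse linear if and only if $\{\eta_n\}$ is (non)transverse.

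The main obstacle is the degeneration step: ruling out that a family of transverse interior arcs of $\eta_n$ collapses onto $\partial\mathcal{F}$ in the limit, thereby spuriously creating a parallel arc of $\eta_\infty$ (or, dually, that a parallel arc dissolves). This is exactly the collapsing/tangent/degenerate singularity phenomenon flagged in the strategy, and it is controlled by the good-sequence hypothesis: since $\eta_\infty$ is a genuine Jordan quasi-circle of $\G$ and $\eta_n\to\eta_\infty$ with uniformly bounded length in the complete space $\mathscr{R}(S^1,W)$ (Proposition \ref{curves-space}), no interior arc can shrink onto the boundary without destroying the Jordan property of $\eta_\infty$. Establishing this non-collapse rigorously---equivalently, that the generating-curve incidence pattern is locally constant along the good-sequence---is where the real work lies; once it is in hand, the orthogonality and the parallel-arc equivalences above combine to give the asserted biconditional.
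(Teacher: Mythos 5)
The first thing to say is that the paper offers no argument to compare against: the proof environment following Corollary~\ref{transverse} is empty, and the only justification anywhere in the text is the one-line remark in Lemma~\ref{good} that the (non)transverse property ``obviously is preserved'' under the conjugating quasiconformal maps. Your proposal is therefore an attempt to fill a vacuum, and to your credit it isolates exactly the two points at which the ``obvious'' claim is in fact problematic: quasiconformal maps distort angles, so orthogonality to $\partial\mathcal{F}_n$ is \emph{not} transported by $f_n$; and an interior arc of $\eta_n$ could a priori collapse onto $\partial\mathcal{F}$ in the limit, creating a parallel arc of $\eta_\infty$ that no $\eta_n$ possesses.

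However, as a proof the proposal has genuine gaps. First, you work throughout with $\eta_n=f_n(\eta_\infty)$ and $\mathcal{F}_n=f_n(\mathcal{F})$; that is the particular good-sequence constructed in Lemma~\ref{good}, but the corollary is stated for an \emph{arbitrary} good-sequence, for which no such per-$n$ homeomorphism of triples $(\mathcal{F}_n,\partial\mathcal{F}_n,\eta_n)$ is given, so the parallel-arc equivalence at each fixed $n$ does not follow. Second, your repair of the angle problem rests on the unproved assertion that once the parallel-arc status is fixed, a fundamental domain realizing orthogonal intersection ``is always available''; this would make transversality equivalent to the absence of a parallel arc, which is a substantive claim about the existence of adapted fundamental domains (note that the parallel-arc condition itself depends on the choice of $\mathcal{F}_\G$) and is nowhere justified. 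Third, and decisively, you state that ruling out the collapse of transverse arcs onto $\partial\mathcal{F}$ ``is where the real work lies'' and do not carry it out; the appeal to completeness of $\mathscr{R}(S^1,W)$ and to $\eta_\infty$ being Jordan does not by itself prevent an interior arc from limiting onto a boundary arc without breaking injectivity. So the crux of the statement is identified but not proved, and the biconditional (in particular the direction deducing properties of every $\eta_n$ from properties of the limit) remains open.
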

 \begin{proof}
 \end{proof}

\begin{lem}[Open]\label{open}
Let $\{\G_n\}\subset\mathfrak{H}$. Let $\G_n\to \G$ be a Schottky group. Let $\eta_\G$ be a quasi-circle of  $\G.$ Then
there exists a relative open neighboredood $\mathscr{O}(\eta_\G)$ of $\eta_\G$ such that every element of $\eta_\G$ in $\mathscr{O}(\eta_\G)$ is a quasi-circle of
$\G.$ 
\end{lem}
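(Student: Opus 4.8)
The plan is to prove Lemma \ref{open} by showing that small perturbations of a quasi-circle, taken in the Fr\'echet topology on $\Psi_\G$, remain $\G$-invariant Jordan curves containing $\Lambda_\G$—that is, they remain quasi-circles of $\G$. The natural strategy is to pass to generating curves. By Proposition \ref{marking} the quasi-circle $\eta_\G$ is generated by some generating curve $\zeta$, and by the discussion preceding Proposition \ref{curves-space} the assignment $\zeta \mapsto \eta_\G$ is continuous with respect to $d_F$ (the bound $d_F(\eta^1_\G,\eta^2_\G)\le M(\inf|\xi(\sigma^1_k)-\zeta(\sigma^2_k)|+|\ell(\xi)-\ell(\zeta)|)$ exhibits this). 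So I would first establish the converse direction locally: any curve $\xi$ in a sufficiently small relative neighborhood $\mathscr{O}(\eta_\G)$ has a well-defined generating curve $\zeta_\xi$ obtained by intersecting $\xi$ with a fixed fundamental domain $\mathcal{F}$, exactly as in the proof of Proposition \ref{marking}.

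The key steps, in order, would be as follows. First, fix a fundamental domain $\mathcal{F}$ of $\G$ with boundary the $2g$ disjoint Jordan curves $\{c_i\}_1^{2g}$, and set $\zeta = \mathcal{F}\cap\eta_\G$ together with $\bar\xi = \partial\mathcal{F}\cap\eta_\G$, so that $\zeta\cup\bar\xi$ generates $\eta_\G$. Second, choose the relative neighborhood $\mathscr{O}(\eta_\G)$ small enough (in $d_F$) that for every $\xi\in\mathscr{O}(\eta_\G)$ the portion $\xi\cap\mathcal{F}$ still consists of $g$ disjoint arcs whose endpoints lie on the $c_i$ and pair up under $\{\g_i\}$ in the combinatorial pattern required by Definition \ref{marking}; here I use that $\xi$ lies in $\Psi_\G$, so the trivial singular curves $\mathscr{E}$ have been excluded and the generating curve of $\xi$ is genuinely simple. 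Third, form $\zeta_\xi = \xi\cap\mathcal{F}$ and verify that $\Lambda_\G\cup\bigcup_{\g\in\G}\g(\zeta_\xi)$ reconstructs $\xi$; since $\xi$ is a closed $\G$-invariant curve built from a simple generating curve and contains $\Lambda_\G$, this identifies $\xi$ as a quasi-circle of $\G$. The continuity bound above guarantees that shrinking $\mathscr{O}(\eta_\G)$ controls how far $\zeta_\xi$ can move from $\zeta$, so the combinatorial configuration is stable.

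The main obstacle I expect is the second step: controlling the \emph{combinatorial type} of the intersection $\xi\cap\mathcal{F}$ under Fr\'echet perturbation. A priori a small $d_F$-perturbation could create extra intersection points with $\partial\mathcal{F}$, split an arc, or push an endpoint from $c_i$ onto a neighboring curve, any of which would destroy the generating-curve structure or introduce a non-simple generating curve. The delicate point is that $d_F$ controls sup-distance between parametrizations and arclength difference but not, directly, transversality of intersections with the $c_i$. I would address this by exploiting that $\partial\mathcal{F}$ is a fixed finite union of Jordan curves separated by a positive distance from each other, and that the arcs of $\zeta$ meet $\partial\mathcal{F}$ at finitely many points; a sufficiently small $d_F$-ball keeps $\xi$ within a thin collar of $\eta_\G$, and rectifiability (bounded length) prevents arclength from accumulating oscillations near $\partial\mathcal{F}$, so no spurious crossings can appear. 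Once the intersection pattern is pinned down, the remaining verification that $\xi$ is $\G$-invariant and simple is a direct application of Proposition \ref{marking} run in reverse.
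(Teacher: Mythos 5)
Your proposal has a genuine gap, and it sits exactly at the point where the lemma is hardest: establishing that a nearby curve is $\G$-\emph{invariant}. In your third step you write that you will ``verify that $\Lambda_\G\cup\bigcup_{\g\in\G}\g(\zeta_\xi)$ reconstructs $\xi$; since $\xi$ is a closed $\G$-invariant curve \dots this identifies $\xi$ as a quasi-circle of $\G$.'' That is circular: the $\G$-invariance of $\xi$ is part of what must be proved (the paper's definition of a non-quasi-circle, stated just before Lemma \ref{singular}, explicitly includes curves that fail to be $\G$-invariant). For a general rectifiable closed curve $\xi$ that is merely $d_F$-close to $\eta_\G$, the set $\Lambda_\G\cup\bigcup_{\g\in\G}\g(\xi\cap\mathcal{F})$ has no reason to equal $\xi$: one can perturb $\eta_\G$ inside a single translate $\g_0(\mathcal{F})$ of the fundamental domain while leaving it unchanged elsewhere, producing a curve arbitrarily close in the Fr\'echet metric that is not invariant under any nontrivial element of $\G$. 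Your combinatorial-stability argument (the ``main obstacle'' you identify) does not rule this out, because it only controls the intersection pattern with $\partial\mathcal{F}$, not the equivariance of the curve across distinct translates of $\mathcal{F}$.

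The missing ingredient is the one the paper actually uses: the neighborhood is taken in the relative topology of $\Psi_\G=\partial\overline{\cup E_{\G_n}}-\mathscr{E}$, whose elements are by construction limits of bounded-length quasi-circles of the approximating groups $\G_n$. The paper's proof transports an open set $\mathcal{O}(\xi)$ of generating curves of $\eta_\G$ through the quasiconformal maps $f_n$ to open sets $f_n(\mathcal{O}(\xi))$ of generating curves for $\G_n$; each resulting curve $\eta_n'$ is automatically $\G_n$-invariant because it is \emph{generated} by its generating curve under the $\G_n$-action, and the uniform length bound $\ell(\eta_n')<c\,\ell(\eta_\G)$ allows passage to a limit $\eta'_\infty$ that inherits $\G$-invariance from $\G_n\to\G$. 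Your proposal never invokes the sequence $\{\G_n\}$, which appears in the hypotheses of the lemma for precisely this reason, so it cannot produce the invariance of the perturbed curves. The transversality and combinatorial issues you flag are real but secondary; the argument fails earlier.
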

\begin{proof}
Let $\mathcal{F}$ be a fundamental domain of $\G.$ Let $\xi$ be the generating curve of $\eta_\G$ with respect to $\mathcal{F}.$ 
Let $\mathcal{F}_n$ be the fundamental domain of $\G_n$ with $\mathcal{F}\to\mathcal{F}_n.$ By Proposition \ref{seq-curve}, we have $f_n:\eta_\G\to\eta_n.$ 
Let $\mathcal{U}(\xi)$ be a small open neighborhood about $\xi$ generating curve of $\eta_\G$ and Let 
$\mathcal{U}(\xi_n)$ to be the open set about the generating curve $\xi_n$ of $\eta_n$ given by $f_n(\mathcal{U}(\xi))$, and set $\mathcal{O}(\xi_n)=\mathcal{U}(\xi_n)\cap\Psi_\G.$ Then $\mathcal{O}(\xi_n)$ is open
sets of generating curves in $\Psi_\G$ (Figure 2). Let $\mathcal{O}(\xi_n)=f_n(\mathcal{O}(\xi))$ for $\mathcal{O}(\xi)$ a open set of generating curves for 
$\eta_{\G}.$ Let $\xi'_n\in f_n(\mathcal{O}(\xi_c))$ and $\eta_n'$ be generated by $\xi'_n.$ Assuming $\mathcal{O}(\xi)$ is sufficiently small neighborhood, we
have length $\ell(\eta_n')<c\ell(\eta_\G)$ for some small $c\ge 1$ for large $n$ and $\eta_n'$ generated by all $\xi'_n\in f_n(\mathcal{O}'(\xi)).$ 
Since $\eta_n\to\eta_\G$, we have $\eta'_\infty$ is quasi-circle of $\G$ with bounded length. This defines a open set which all elements are quasi-circles in $\Psi_\G$. \end{proof}
Note that the above lemma essentially states that, for  $\eta_n$  quasi-circles of $\G_n$ with $\eta_n\to\eta_\G$ and $\xi_n$ generating curves of $\eta_n$, we have for some small deformations of $\xi_n$ which generates $\eta'_n$ and converges in $\Psi_\G$ is quasi-circles of $\G$.

\begin{figure}[ht!]
\centering
\includegraphics[width=35mm]{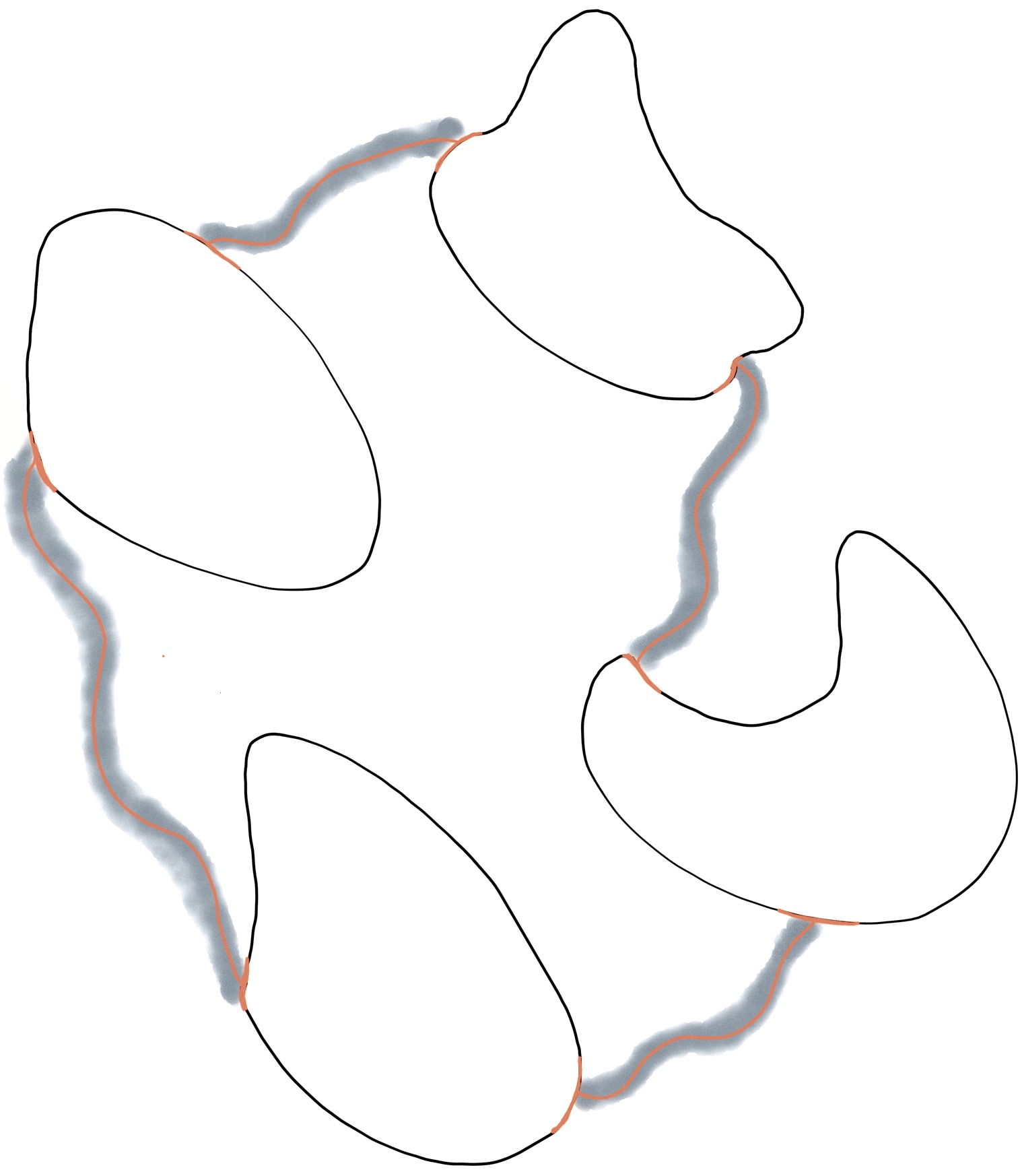}  
\caption{Open set of generating curves about $\zeta$ \label{overflow}}
\end{figure}
\begin{cor}\label{open-seq}
Let $\{\G_n\}\subset\mathfrak{H}$ with $\G_n\to\G$ a Schottky group $\G.$ Let $\{\eta_n\}$ be a good-sequence of quasi-circles of $\{\G_n\}.$ Then
there exists an open neighborhood $\mathscr{O}(\eta_\infty)$ of $\eta_\infty$ such that every element of $\mathscr{O}(\eta_\infty)$ is a quasi-circle of
$\G.$
\end{cor}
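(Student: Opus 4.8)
The plan is to derive this statement directly from Lemma \ref{open}, since the only thing separating the two is the identification of which curve Lemma \ref{open} is applied to. The key observation is that the hypotheses of a good-sequence already hand us the required quasi-circle for free: by Definition \ref{good}, the limit $\eta_\infty$ of a good-sequence is itself a quasi-circle of the limit group $\G$. Thus Lemma \ref{open} applies verbatim with its $\eta_\G$ taken to be $\eta_\infty$, and the corollary is essentially a specialization of that lemma to the limits of good-sequences.

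First I would unwind Definition \ref{good}: because $\{\eta_n\}$ is a good-sequence of quasi-circles of $\{\G_n\}$, it converges in $\mathscr{R}(S^1,W)$ and its limit $\eta_\infty$ is a quasi-circle of $\G$. In particular $\eta_\infty$ is a $\G$-invariant Jordan curve containing $\Lambda_\G$; being a simple closed curve it is not one of the trivial singular curves in $\mathscr{E}$, so $\eta_\infty\in\Psi_\G$ and the relative open sets $\mathscr{O}(\eta_\infty)$ in the relative topology on $\Psi_\G$ are well defined. Second, I would apply Lemma \ref{open} to the convergence $\G_n\to\G$ with $\eta_\G:=\eta_\infty$. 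That lemma produces a relative open neighborhood $\mathscr{O}(\eta_\infty)$ of $\eta_\infty$ in $\Psi_\G$ every element of which is a quasi-circle of $\G$, which is exactly the claimed conclusion. Concretely, the mechanism is the one already carried out in the proof of Lemma \ref{open}: one fixes a generating curve $\xi$ of $\eta_\infty$ relative to a fundamental domain $\mathcal{F}$ of $\G$, takes a sufficiently small neighborhood $\mathcal{U}(\xi)$ in the space of generating curves, transports it by the quasiconformal maps $f_n$ to generating curves of the nearby $\eta_n$, and uses the length bound $\ell(\eta_n')<c\,\ell(\eta_\infty)$ to force every resulting limit curve to be a bounded-length quasi-circle of $\G$.

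The hard part, such as it is, is purely bookkeeping rather than analysis: one must verify that the good-sequence limit $\eta_\infty$ is precisely the quasi-circle input to Lemma \ref{open}, and that the neighborhood the lemma returns is genuinely a relative neighborhood of $\eta_\infty$ in $\Psi_\G$ rather than of some auxiliary generating curve. Once the phrase \emph{``$\eta_\infty$ is a quasi-circle of $\G$''} is extracted from Definition \ref{good}, there is no residual analytic content, and the corollary follows immediately.
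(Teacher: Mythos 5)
Your proposal matches the paper's own (one-line) proof: the paper simply cites the definition/existence of good-sequences together with Lemma \ref{open}, exactly as you do by taking $\eta_\G:=\eta_\infty$ and applying that lemma. Your additional bookkeeping about $\eta_\infty$ lying in $\Psi_\G$ is a reasonable elaboration of the same argument, not a different route.
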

 \begin{proof}
Follows from Lemma \ref{good} and Lemma \ref{open}.
\end{proof}
Next we analyze the formations of singularities for a given sequence of classical Schottky groups converging to a Schottky group. These types of singularities has been
studied in \cite{Hou1,Hou}.
\begin{lem}[Singularity]\label{singular}
Let $\{\G_n\}\subset\mathfrak{H}$ with $\G_n\to\G$ a Schottky group. Assume that $\G$ is non-classical Schottky group.  Then there exists $\eta_\G$ such that,
every $\mathscr{O}(\eta_\G)$ contains a deformation of $\eta_\G$ which is non-quasi-circle. 
\end{lem}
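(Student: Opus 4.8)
The plan is to show that the non-classicality of $\G$ forces the circle-bounded fundamental domains of the approximating groups $\G_n$ to degenerate, and that this degeneration supplies a distinguished quasi-circle every neighborhood of which contains a self-crossing deformation. First I would fix, for each $n$, a classical fundamental domain $\mathcal{F}_n$ of $\G_n$ bounded by $2g$ disjoint circles $\{\mathcal{C}_i^n\}_1^{2g}$, together with side-pairing generators $\g_i^n\to\g_i$. Passing to a subsequence, I would let each $\mathcal{C}_i^n$ converge in the space of circles (centre and radius, with degenerations allowed). If the limiting family $\{\mathcal{C}_i^\infty\}$ consisted of $2g$ circles with pairwise disjoint closures, then together with the limiting side-pairings it would exhibit $\G$ as a \emph{classical} Schottky group, contrary to hypothesis. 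Hence the limiting configuration is singular, and by the analysis of \cite{Hou1,Hou} the singularity is of one of three types: \emph{tangent} (two circles $\mathcal{C}_i^\infty,\mathcal{C}_j^\infty$ meet at a point), \emph{degenerate} (a radius tends to $0$ or $\infty$), or \emph{collapsing} (two boundary components coalesce). Let $p$ denote the singular point; I then work with whichever type actually occurs.

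Next I would construct $\eta_\G$. Using Proposition \ref{domain} I take $\mathcal{F}=\lim\mathcal{F}_n$ and choose a generating curve $\zeta$ for $\mathcal{F}$ one of whose arcs $\zeta_0$ terminates at, or passes through, the singular locus $p$ on $\partial\mathcal{F}$. Since $\{\G_n\}\subset\mathfrak{H}$ gives $\mathfrak{D}_\G\le\lambda<1$, Proposition \ref{rec} shows that $\eta_\G=\Lambda_\G\cup\bigcup_{\g\in\G}\g(\zeta)$ is a rectifiable quasi-circle of $\G$, and Proposition \ref{seq-curve} places it in $\Psi_\G$. The essential local feature is that near $p$ the curve $\eta_\G$ carries two families of translate-arcs accumulating from the two sides of the singularity (from $D_i$ and $D_j$ in the tangent case, from inside and outside the collapsing disk otherwise), and that the singularity pinches the region available to these families down to the single point $p$, which then acts as the unique gateway through which a simple closed curve can thread.

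Then I would establish the obstruction. Using the correspondence between relative neighborhoods $\mathscr{O}(\eta_\G)$ and neighborhoods of the generating curve set up before Proposition \ref{curves-space} and in the proof of Lemma \ref{open}, I note that every $\mathscr{O}(\eta_\G)$ contains a deformation $\zeta'$ that displaces the pinned arc $\zeta_0$ off $p$ by an arbitrarily small amount. Pushing $\zeta'$ through the quasiconformal maps $f_n$ yields generating curves $\zeta'_n=f_n(\zeta')$ for honest (simple) quasi-circles $\eta'_n$ of the classical $\G_n$, and their limit $\eta'_\infty$ lies in $\Psi_\G$. Because the fundamental domains of $\G_n$ are running into the singularity, the two accumulating families of translate-arcs near $p$ lose their single escape point in the limit, so $\eta'_\infty$ acquires a transverse self-intersection between some $\g(\eta'_\infty)$ and $\g'(\eta'_\infty)$ with $\g\ne\g'$. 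A curve with such a crossing is not Jordan and hence not a quasi-circle of $\G$; moreover the crossing arises from the group development rather than from $\zeta'$ itself, so $\eta'_\infty$ is not a trivial singular curve and genuinely lies in $\mathscr{O}(\eta_\G)\subset\Psi_\G$. This exhibits a non-quasi-circle deformation in every $\mathscr{O}(\eta_\G)$, as required.

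The main obstacle is the quantitative local analysis in the last step: one must show that the pinching is genuinely \emph{two-sided}, i.e. that no small displacement of $\zeta_0$ in \emph{any} direction can avoid a crossing while keeping the developed curve in $\Psi_\G$ and of bounded length. This reduces to controlling the $\G$-action in a shrinking neighborhood of $p$ and verifying that the tangency or collapse forces the two families of translate-arcs to interleave. I would treat the three singularity types separately, using in each case the Möbius geometry of the touching or collapsing circles (and, for the degenerate case, the blow-up of $\g_i^n{}'$ as the radius vanishes) to confirm that the interleaving, and hence the self-intersection, is unavoidable.
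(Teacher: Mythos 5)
Your overall skeleton for the tangency and degeneration cases matches the paper: pass to a subsequence so the boundary circles of the classical fundamental domains $\mathcal{F}_n$ converge, observe that non-classicality of $\G$ forces a singular limiting configuration of one of the three types, run a quasi-circle through the singular point, and show that nearby generating curves develop into limits with a loop (self-crossing) at that point. However, there are two genuine gaps.

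First, your treatment of the \emph{collapsing} case is wrong in kind. When two boundary circles coalesce into a single circle $\mathcal{C}$ there is no distinguished pinch point $p$ and no ``two families of translate-arcs interleaving at a gateway''; the entire domain is squeezed onto $\mathcal{C}$. The paper does something completely different here: it shows that collapsing forces every fixed point, hence all of $\Lambda_\G$, to lie on $\mathcal{C}$ (by propagating the collapse through group elements $\g^k_\alpha$ and deriving a contradiction from three fixed points off $\mathcal{C}$), so $\G$ is a Fuchsian Schottky group, and then invokes Button's theorem that all Fuchsian Schottky groups are classical --- contradicting the non-classicality hypothesis. So in this case one does not exhibit a singular deformation at all; one rules the case out. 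Your proposal has no mechanism to handle this branch, and the self-intersection argument you propose will not produce one.

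Second, even in the cases where your mechanism is the right one, you have deferred the actual content. Your closing paragraph concedes that the ``quantitative local analysis'' --- showing that \emph{every} small displacement of the pinned arc, in every direction, still develops into a curve with a crossing --- remains to be done; but that is precisely the assertion of the lemma (every $\mathscr{O}(\eta_\G)$ contains a non-quasi-circle), not an implementation detail. Relatedly, in the paper's degeneration case where two circles merge to a point, the argument is not purely a crossing argument: it runs a dichotomy in which either a loop singularity forms or else \emph{all} quasi-circles of $\G$ would be forced through a fixed non-limit point $q$, which is then contradicted by exhibiting a quasi-circle missing $q$ (via Proposition~\ref{seq-curve}). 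That second horn of the dichotomy is absent from your proposal, and without it the degeneration case does not close.
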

Here we say a closed curve is a \emph{non-quasi-circle}, if it's non-Jordan(contains a singularity point), or it's not $\G$-invariant.
\begin{proof}
For each $n$, let $\mathcal{F}_n$ be a classical fundamental domain of $\G_n.$ 
Given a sequence of classical fundamental domains $\{\mathcal{F}_n\}$ the convergence is consider as follows: $\{\partial\mathcal{F}_n\}$ is collection of 
$2g$ circles $\{c_{i,n}\}_{i=1,...,2g}$ in the Riemann sphere $\overline{\mathbb{C}}$, pass to a subsequence if necessary, then $\lim_n c_{i,n}$, is either a point or a circle. 
We say $\{\mathcal{F}_n\}$ convergents to $\mathcal{G}$, if $\mathcal{G}$ is a region that have boundary consists of 
$\lim_n c_{i,n}$ for each $i$, which necessarily is either a point or a circle. Note that $\mathcal{G}$ is not necessarily a fundamental domain, nor it's necessarily connected.\par
Let $\lim\mathcal{F}_{n}=\mathcal{G}.$ By assumption that $\G$ is not a classical Schottky group, we have $\mathcal{G}$ is not a classical fundamental domain of $\G.$ We have $\partial\mathcal{G}$ consists of circles or points. However these circles may not be disjoint. More precisely, we have the following possible degeneration of circles of $\partial\mathcal{F}_n$ which gives $\partial\mathcal{G}$ of at least one of following singularities types:
\begin{itemize}
\item{Tangency}:
Contains tangent circles. 
\item{Degeneration}:
Contains a circles degenerates into a point. 
\item{Collapsing}:
Contains two circles collapses into one circle. Here we have two concentric circles centered at origin and rest circles squeezed in between these two and
these two collapse into a single circle in $\mathcal{G}.$

\end{itemize}
\noindent $\partial\mathcal{G}$ contains: Tangency.\par
Let $p$ be a tangency point.
Let $\eta_\G$ be a quasi-circle of $\G$ which pass through point $p.$ Assume the Lemma is false, then all sufficiently small open neighborhood $\mathscr{O}(\eta_\G)$ contains only quasi-circles of $\G.$  It follows from Proposition \ref{sequence}, we have a sequence with $\eta_n\to\eta_\G$.
Let $\xi_n$ be the generating curves of $\eta_n$ with respect to $\mathcal{F}_n.$ Define $\phi_n$ as follows. Note that $\xi_n\cap\partial\mathcal{F}_n$ consists of points or linear arcs. We define $\zeta_n$ be a generating curve with $\zeta_n\cap\partial\mathcal{F}_n$ to consists of linear arcs, with one of its end point to be the point that converges into a tangency in $\partial\mathcal{G}.$  In addition we also require the arcs on $\partial\mathcal{F}_n$ that with a end points which converges to tangency point be be connect by a arc between the other end points. It is clear that every 
$\mathscr{O}(\eta_n)\cap\mathcal{F}_n$ contains a generating curve of this property. Let $\phi_n$ be the quasi-circle generated by $\zeta_n,$ and set
$\phi=\lim\phi_n$, then $\phi$ contains a loop singularity at a point of tangency (Figure 3). Hence $\phi$ is non-quasi-circle of $\G.$  Since every $\mathscr{O}(\eta_\G)$
contains such a $\phi$ we must have the lemma to be true for this type of singularity.\par
\begin{figure}[ht!]
\centering
\includegraphics[width=35mm]{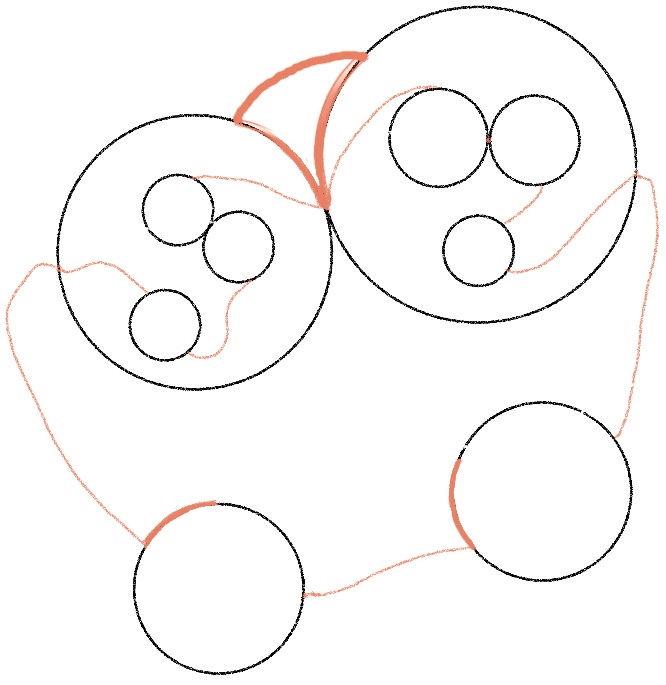}  
\caption{Tangency singularity \label{overflow}}
\end{figure}
\noindent  $\partial\mathcal{G}$ contains: Degeneration.\par
In this case, we must have any quasi-circle $\eta_n$ pass through a degeneration point. Note that we can't have all circles degenerates into a single point which would collapses quasi-circles to a point.  
To see this, let $\g_{i,n}\to\g_i$ be the generating set of $\G_n$ corresponding to circles $\{c_{i,n}\}$ and $\g_i$ it's limit in $\G.$ Since by assumption $\G$ is
Schottky group, so $\g_i$ must all be loxodromic, hence distinct fixed points. But if all circles degenerates into a single point, and since all fixed points of 
$\g_{i,n}$ are contained within disjoint regions bounded by circles $\{c_{i,n}\}$ then, we can't have all of $\{\g_i\}$ fixed points distinct, a contradiction. \par
In general, even $\G\in\partial\overline{\mathfrak{J}_g}$ we still can't have all circles degenerates into a single point. Since degenerating into a single point would imply $\G$ either is not discrete or not free group. But element of $\partial\overline{\mathfrak{J}_g}$ is either cusps or geometrically infinite group, and both
types are discrete and free group.
\par
There are two possibilities to have degenerate points. 
Case (A): two circles merge into a single degenerate point $p$. Case (B): A circle degenerates into a point on a circle. \par
Consider (A). In this case, any quasi-circles $\eta_\infty$ will have two possible properties, either there is a point $q$ on some circle of $\partial\mathcal{G}$ such that every quasi-circle
must pass through $q$, or two separate arcs of $\eta_\infty$ meet at $p$. The second possibility implies there exists a loop singularity at $p$, hence $\eta_\infty$ can not be a Jordan curve (Figure 4). Therefore we only have the first possibility for $\eta_\infty.$ But it follows from Proposition \ref{seq-curve}, all rectifiable quasi-circles of $\G$ is the limit
of some sequence of quasi-circles of $\G_n$, and hence all must pass through $q$. However $q$ is not a limit point, hence we can have some quasi-circle not passing through $q,$ a contradiction.\par
\begin{figure}[ht!]
\centering
\includegraphics[width=65mm]{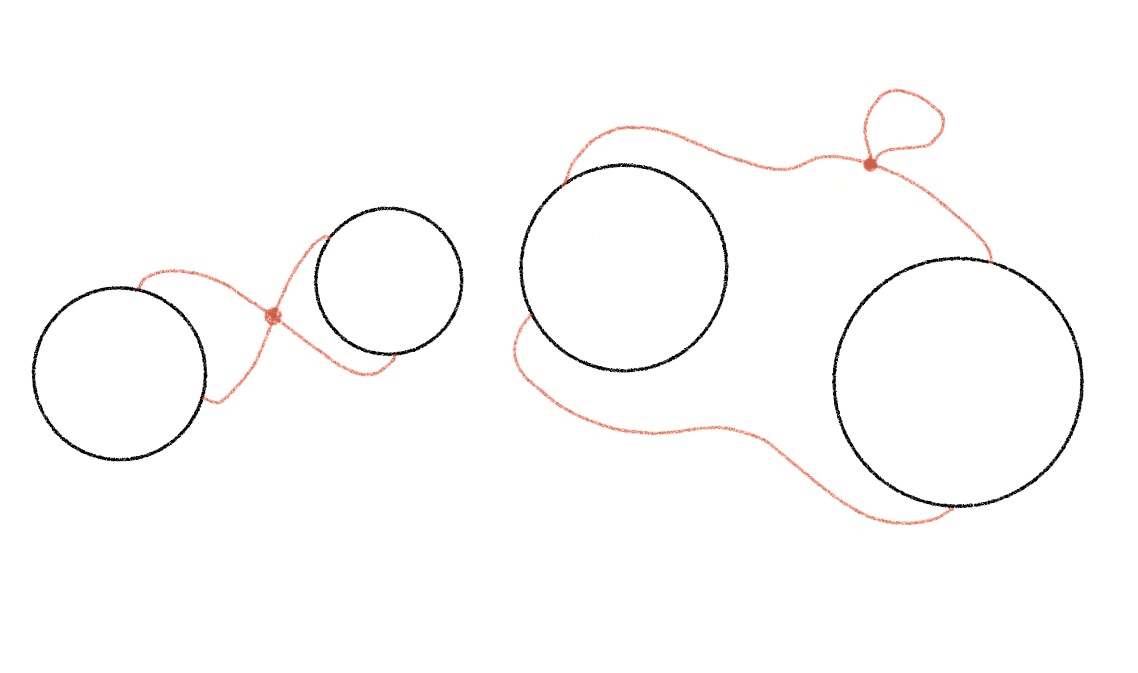}  
\caption{Degenerate singularities \label{overflow}}
\end{figure}

Now consider (B). Here we can assume that there exists at least two circles that do not degenerates into points. Otherwise, we will have the third type (collapsing) singularity, which we will consider next. Having some circles degenerates into a point on to a circle at $p$,  we have a sequence of  quasi-circles $\eta_n$ which passes through $p.$ This is given by the generating curves that have curves connecting the degenerating circle to point $p_n\to p$ converging to linear arc intersecting
orthogonally at boundary. But any neighboring quasi-circle of $\eta_n$  will converges to a $\eta_\infty$ with a loop singularity at $p,$ which is not a Jordan curve
(Figure 5). \par
\begin{figure}[ht!]
\centering
\includegraphics[width=30mm]{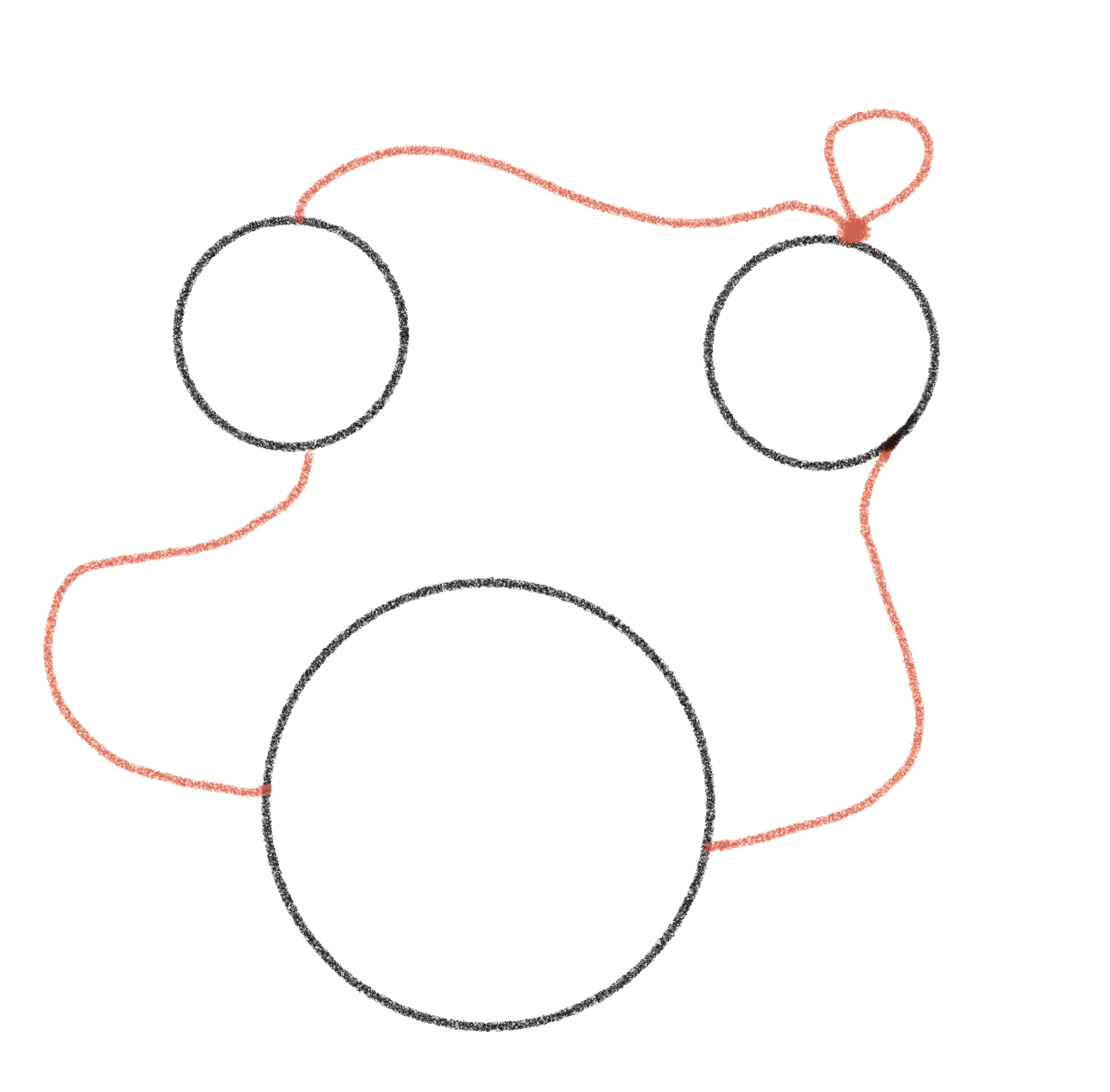}  
\caption{Degenerate singularities \label{overflow}}
\end{figure}

Finally, we note that if we have a degeneration point $p$ which is not a limit point, then there exists a neighboring quasi-circle of $\eta_\G$ that misses the point $p.$ Hence any $\mathscr{O}(\eta_\G)$ will contains some curve which is the limit of a sequence of quasi-circles that do not pass through the point. This gives a non-quasi-circle in $\mathscr{O}(\eta_\G).$\\
\\
\noindent  $\partial\mathcal{G}$ contain: Collapsing.\par
Let $\mathcal{C}$ denote the collapsed circle, i.e $\mathcal{F}_n\to\mathcal{C}.$ We will show then $\Lambda_\G\subset\mathcal{C}.$ Assume the contrary and  
suppose that there exists a fixed point not on 
$\mathcal{C}.$ Then there are infinitely many elements with fixed points not on $\mathcal{C},$  since limit set is perfect set. \par
Let $\g_n\in\G_n$ with $\g_n\to\g.$  
Since $\g_n(\mathcal{F}_n)\to\g(\mathcal{C}),$ we must have either $\g(\mathcal{C})=\mathcal{C}$ or 
$\g(\mathcal{C})\cap\mathcal{C}=\emptyset.$ 
By assumption we have a fixed point of a element of $\G$ is $\not\in\mathcal{C},$ which implies infinitely many fixed points $\not\in\mathcal{C}.$ So take three distinct fixed points $a,b,c$ which are respectively attractive fixed points of $\g_a,\g_b,\g_c\in\G$ such that $a,b,c\not\in\mathcal{C}.$ Then $\g^k_\alpha(\mathcal{C})\to\alpha$ for each $\alpha\in\{a,b,c\}$ and $k\to\infty$. Let $\g^k_{n,\alpha}\in\G_n$ with $\g^k_{n,\alpha}\to\g^k_\alpha.$ Since $\mathcal{F}_n\to\mathcal{C}$ so we have 
$\g^k_{n,\alpha}(\mathcal{F}_n)\to\g^k_\alpha(\mathcal{C})$ for each $k.$ Now since $\g^k_\alpha(\mathcal{C})$ converges to distinct fixed points $a,b,c$ so by choose $k$ large enough we have for sufficiently large $n$, $\g^k_{n,\alpha}(\mathcal{F}_n)$ are contained within distinct regions bounded by circles of $\partial\beta_n(\mathcal{F}_n)$ for some $\beta_n\in\G_n.$ To show this, we first choose a element $\beta_n'\in\G_n$ so that fixed point $a$ is contained in a disk bounded by $\partial\beta'_n(\mathcal{F}_n)$ and fixed points $b,c$ are bounded in a region disjoint from the disk. Then choose $\beta_n''\in\G_n$ so that $b,c$ fixed points lie within disjoint disks bounded by 
$\partial\beta_n(\mathcal{F}_n),$ where $\beta_n=\beta''_n\beta_n'.$ Hence for large $k$, we have $\g^k_\alpha(\mathcal{C})$ for $\alpha\in\{a,b,c\}$ will lies in disjoint regions bounded by circles of $\partial\beta_n(\mathcal{F}_n)$, for sufficiently large $n$, see Figure 6.
\begin{figure}[ht!]
\centering
\includegraphics[width=58mm]{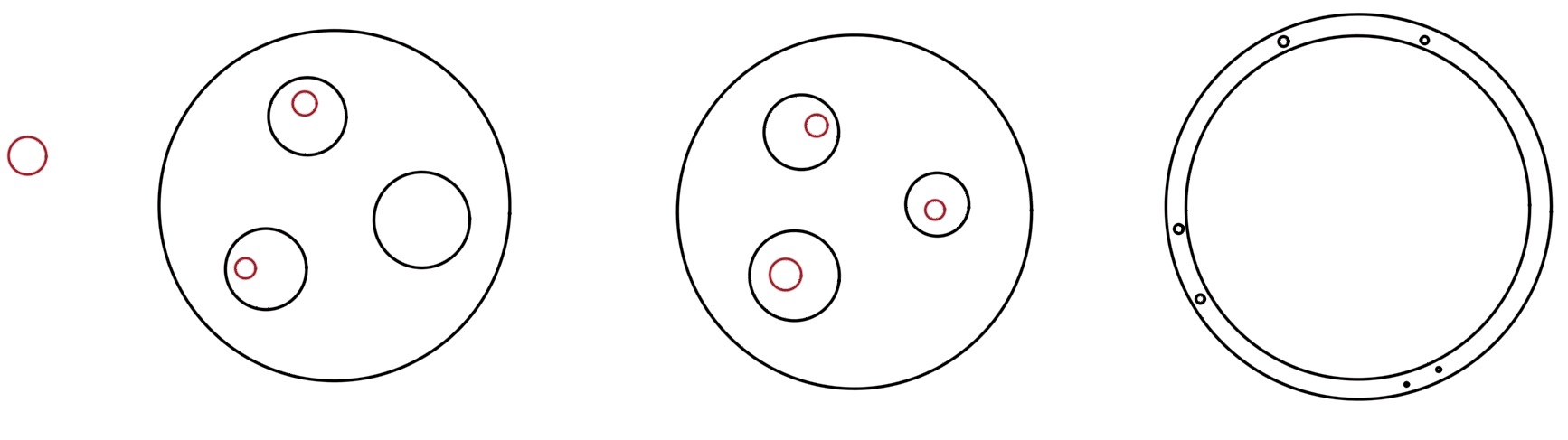}  
\caption{First figure have a $\g^k_\alpha(\mathcal{C})$(in red) outside all circles which forbid collapsing. Second have all three $\g^k_\alpha(\mathcal{C})$ in a big circle
which forbid collapsing. The third don't contain any and so collapses. \label{overflow}}
\end{figure}\par
Let $\beta\in\G$ be the limit of $\beta_n.$ Since $\beta_n(\mathcal{F}_n)\to\beta(\mathcal{C})$ we have $\beta_n(\mathcal{F}_n)$ collapsing to a circle. However,
$\g^k_\alpha(\mathcal{C})$ do not collapse to points for fixed $k.$ Hence $\beta_n(\mathcal{F}_n)\not\to\beta(\mathcal{C})$, a contradiction.

\par 
Hence all fixed points of elements of 
$\G$ are $\subset\mathcal{C},$ which implies $\Lambda_\G\subset\mathcal{C}.$ Since $\G$ is Schottky group, we must have $\G$ is Fuchsian schottky group.
By \cite{Button}, $\G$ is classical Schottky group, a contradiction.

\end{proof}
\begin{lem} 
$\eta_\G$ in Lemma $3.11$ is non-trivial singular $\G$-invariant closed curve.
\end{lem}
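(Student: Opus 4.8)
The plan is to verify, for the curve produced in the proof of Lemma~\ref{singular}, the three features packaged in the phrase \emph{non-trivial singular $\G$-invariant closed curve}: that it is a closed $\G$-invariant curve, that it is genuinely singular (non-Jordan), and that its singularity is non-trivial, i.e. not inherited from a non-simple generating curve, so that it lies in $\Psi_\G=\partial\overline{\cup E_{\G_n}}-\mathscr{E}$ rather than in the discarded set $\mathscr{E}$. Concretely, the relevant curve is the obstruction $\phi=\lim_n\phi_n$, where each $\phi_n$ is the quasi-circle of $\G_n$ generated by the simple curve $\zeta_n$ chosen there so that $\zeta_n\cap\partial\mathcal{F}_n$ is a union of disjoint linear arcs, one endpoint of which converges to the singularity point $p\in\partial\mathcal{G}$.

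First I would establish closedness and $\G$-invariance. Each $\phi_n=\Lambda_{\G_n}\cup\cup_{\g\in\G_n}\g(\zeta_n)$ is a $\G_n$-invariant closed curve of uniformly bounded length, the length bound coming, as in Propositions~\ref{rec} and \ref{seq-curve}, from $\G_n\in\mathfrak{H}$ and convergence of the Poincar\'e series. By Proposition~\ref{curves-space} the Fr\'echet limit $\phi$ therefore exists in $\mathscr{R}(S^1,W)$ as a closed rectifiable curve. For invariance, given $\g\in\G$ pick $\g_n\in\G_n$ with $\g_n\to\g$; then $\g_n(\phi_n)=\phi_n\to\phi$ while also $\g_n(\phi_n)\to\g(\phi)$, whence $\g(\phi)=\phi$. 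Thus $\phi$ is a closed $\G$-invariant curve, and as a limit of elements of $E_{\G_n}$ it lies in $\overline{\cup E_{\G_n}}$; being non-Jordan (below) it lies on the boundary $\partial\overline{\cup E_{\G_n}}$.

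For singularity I would simply invoke the case analysis already carried out in Lemma~\ref{singular}: at a tangency or degeneration point $p$ the two arcs of $\phi_n$ approaching the merging boundary circles are forced to meet at $p$ in the limit, producing the loop (self-intersection) depicted in Figures~3--5. Hence $\phi$ is not a Jordan curve, i.e. it is singular.

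The decisive step, and the one I expect to be the main obstacle, is non-triviality: showing $\phi\notin\mathscr{E}$. By definition an element of $\mathscr{E}$ is a curve whose singularity is produced by a non-simple \emph{generating} curve, so I must show that the generating curve of $\phi$, namely $\zeta=\lim_n\zeta_n$, is itself simple. Each $\zeta_n$ is a union of disjoint linear arcs meeting $\partial\mathcal{F}_n$ orthogonally, hence simple; the only way simplicity could fail in the limit is for two distinct endpoints of these arcs to collide. I would preclude this by choosing, in the construction of Lemma~\ref{singular}, all endpoints of $\zeta_n$ to remain uniformly separated except for the single prescribed endpoint converging to $p$. With this choice $\zeta$ is simple, so the loop singularity of $\phi$ cannot be attributed to its generating curve; it is forced entirely by the degeneration $\mathcal{F}_n\to\mathcal{G}$, which identifies two orbit translates of the simple arc $\zeta$ at $p$. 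Consequently $\phi\in\partial\overline{\cup E_{\G_n}}-\mathscr{E}=\Psi_\G$ is a non-trivial singular $\G$-invariant closed curve, exactly what is needed so that, together with Lemma~\ref{open}, the presence of $\phi$ in every $\mathscr{O}(\eta_\G)$ contradicts $\G$ being non-classical.
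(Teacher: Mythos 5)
Your reduction of the statement to three checks (closed, $\G$-invariant, singular, and non-trivial) matches the intended content, and the first two checks are handled essentially as the paper would: uniform length bounds from $\mathfrak{H}$ plus completeness of $\mathscr{R}(S^1,W)$ give the closed limit curve, and $\g_n(\phi_n)=\phi_n$ passes to the limit to give invariance. The singularity itself is indeed already supplied by Lemma \ref{singular}. But your treatment of the decisive point, non-triviality, has a genuine gap. The set $\mathscr{E}$ is defined by an \emph{existential} condition: a curve is trivial singular if it is generated by \emph{some} non-simple generating curve, where the generating curve depends on a choice of Schottky domain. To conclude $\phi\notin\mathscr{E}$ you must therefore rule out a non-simple generating curve for \emph{every} admissible choice of fundamental domain (and of free generating set). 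What you prove is only that the particular generating curve $\zeta=\lim_n\zeta_n$, taken with respect to the particular domains $\mathcal{F}_n$ used in the construction, is simple. A curve can perfectly well admit one simple generating curve with respect to one domain and a non-simple one with respect to another (the singular point $p$ may sit on $\partial\mathcal{F}$ for your domain but in the interior of a different Schottky domain, where it forces the generating curve itself to self-intersect). Exhibiting one good generating curve does not negate the existential clause defining $\mathscr{E}$.

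This universal statement is exactly what the paper's proof labors over, and none of that work appears in your proposal. For tangency singularities the paper first disposes of removable tangencies, then splits into the case where the tangency is a fixed point (where no Schottky domain can exclude a fixed point from the exterior of its Jordan curves, so the singularity cannot be relocated into a generating curve) and the case of degree-four, girth-three graph singularities, where it assumes a singular generating curve $\xi$ exists for some other domain with generators $\gamma,\rho$, enumerates the combinatorial possibilities for $\xi$, and derives group relations such as $\psi^{-1}\rho\gamma^{-1}\psi=(\beta\alpha^{-1})^2$ that contradict freeness of the generators. For degeneration singularities it argues that a region collapsing to a point would have to contain infinitely many singularities of $\eta_\G$ while a trivializing domain could only account for finitely many. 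Your proposal needs an argument of this kind (quantified over all Schottky domains), not just the simplicity of the one generating curve you constructed; as written, the non-triviality step does not follow.
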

\begin{proof}
Consider tangency. In this case we will see that we have either: $(a)$ tangencies are removable i.e. we can choose classical domain without tangency, or
$(b)$ one of the tangencies is a fixed point, or $(c)$ there exists $\eta_\G$ such that every singularities have \emph{four} branches arcs (degree four singularity), such that $\eta_\G$ curve is formed by linking collection of closed graphs of girth $3$ with every vertices of degree four in the graph. We will see that in (b) and (c) 
$\eta_\G$ is \emph{not} trivial singular curve. This can be seen as follows.\par 
First for simplicity suppose rank two $<\alpha,\beta>$ with corresponding circles $\{C_\alpha,C_\alpha',C_\beta,C_\beta'\}$ with tangency. If either $C_\alpha,C_\alpha'$ or $C_\beta,C_\beta'$ is tangent then we have a fixed point of the tangency. \par 
Now suppose
we have just one of tangency from either $C_\alpha,C_\beta$ or $C_\alpha,C_\beta'$ both not both, say $C_\alpha,C_\beta$. Then we can just shrink $C_\alpha$ 
the radius a bit and enlarge $C_\alpha'$ proportionally so that it remains disjoint from rest of circles. Better to way to see this is by conjugating $C_\alpha,C_\alpha'$
into circles centered on origin, then one can easily see we can remove the tangency by choose another set of circles which gives classical domain, see Figure $7$. \par
Next suppose
both $C_\alpha,C_\beta$ and $C_\alpha,C_\beta'$ are tangent. Then the tangency is either fixed point of $\alpha^{-1}\beta$ or it's not a fixed point. 
Suppose the latter, then the singularity constructed in the proof of Lemma 3.11 gives $\bigtriangledown$-type singularity with bottom vertex the tangency of
$C_\alpha,C_\beta$. Similarly $C_\alpha,C_\beta'$ produce same type of singularity. We choose the top vertices of $\bigtriangledown$ to be images of tangency under $\alpha,\beta $, since by assumption the tangency are not fixed points. We connect the two pair tangent circles by straight arcs correspondingly. Then these
triangular curves tessellating into graph of $\bigtriangledown\bigtriangleup$ attached through $\alpha,\beta$ maps, and converges to limit points of $\Gamma$ in the curve, hence we have a closed graph. In addition, it's easy to see that every vertex of the graph is of degree four with girth three in $\eta_\G$. Hence $\eta_\G$ is formed by link (images of straight arcs) of collections of degree four closed graphs of girth $3$, see Figure $8$.
\begin{figure}[ht!]
\centering
\includegraphics[width=35mm]{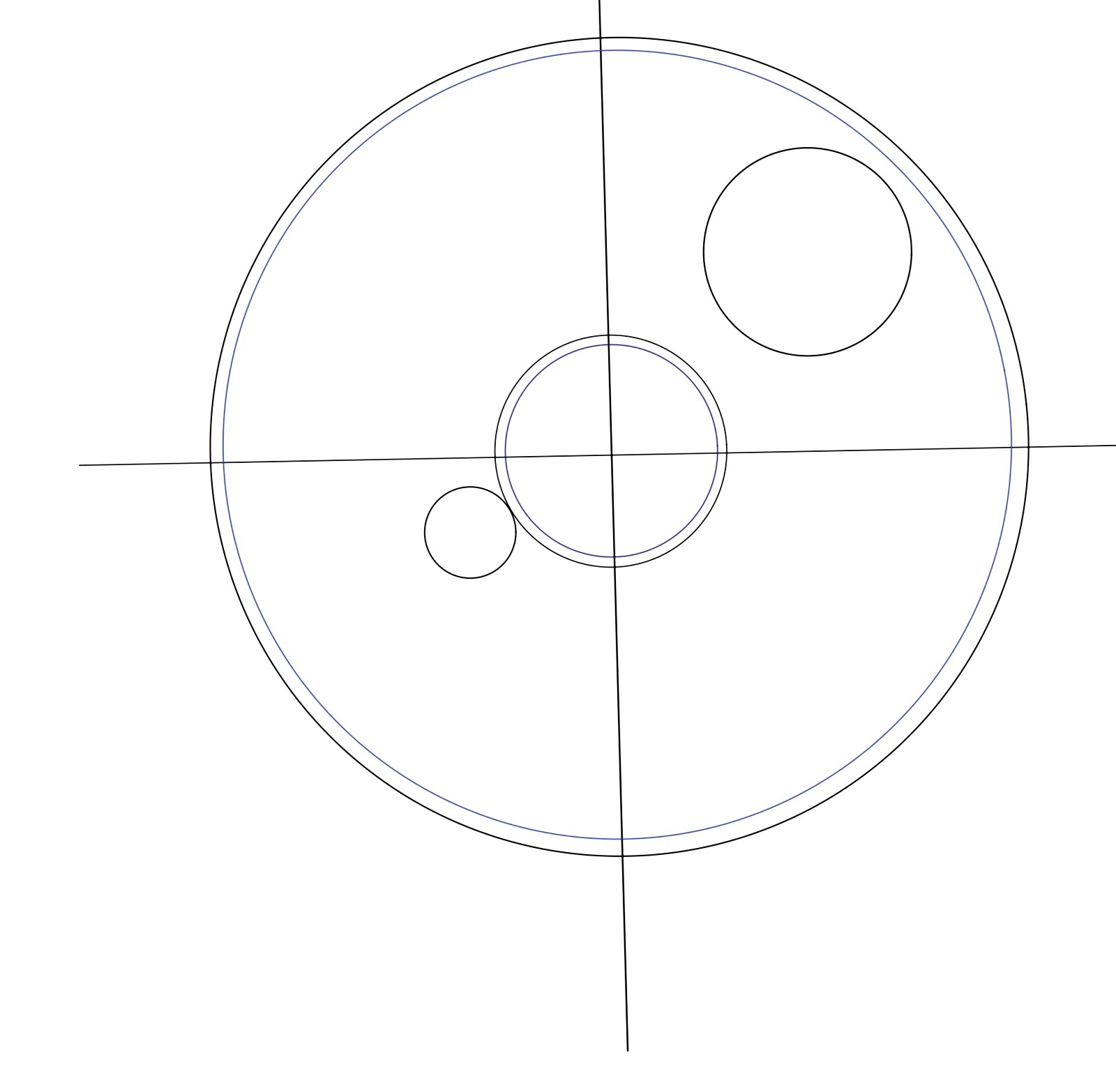}  
\caption{removable tangency \label{overflow}}
\end{figure}
\begin{figure}[ht!]
\centering
\includegraphics[width=38mm]{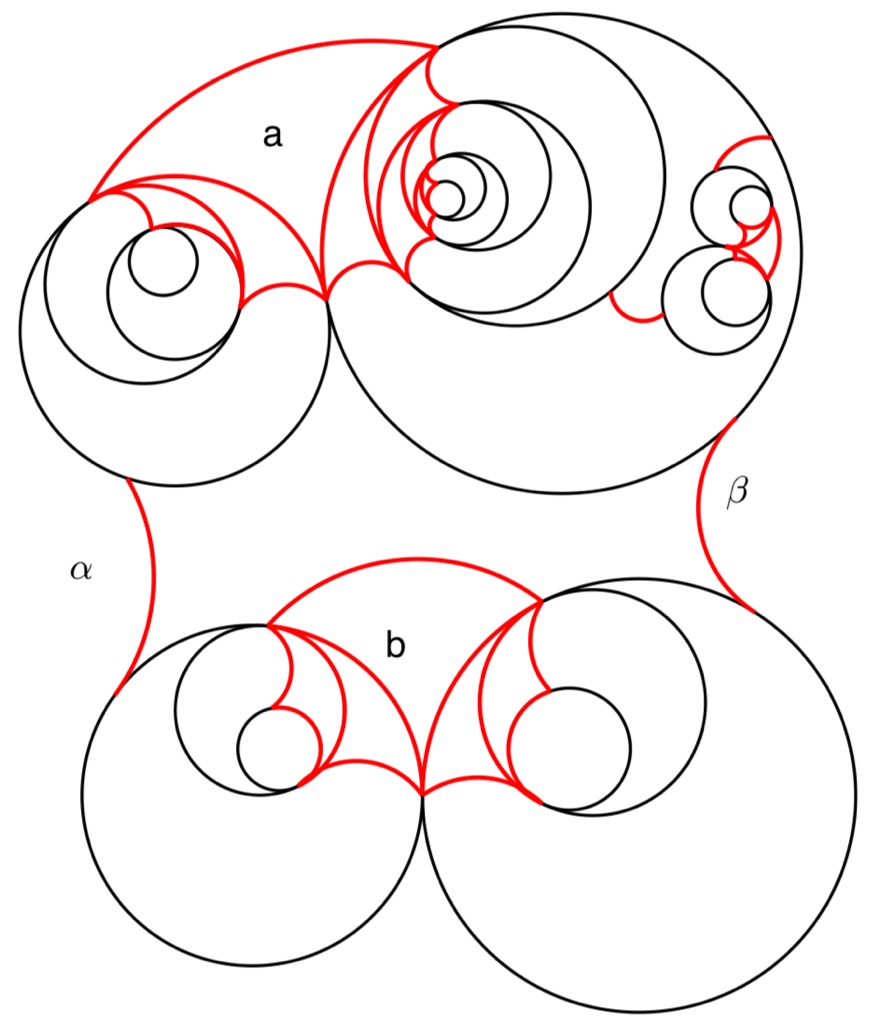}  
\caption{tangency singularities \label{overflow}}
\end{figure}
\begin{figure}[ht!]
\centering
\includegraphics[width=39mm]{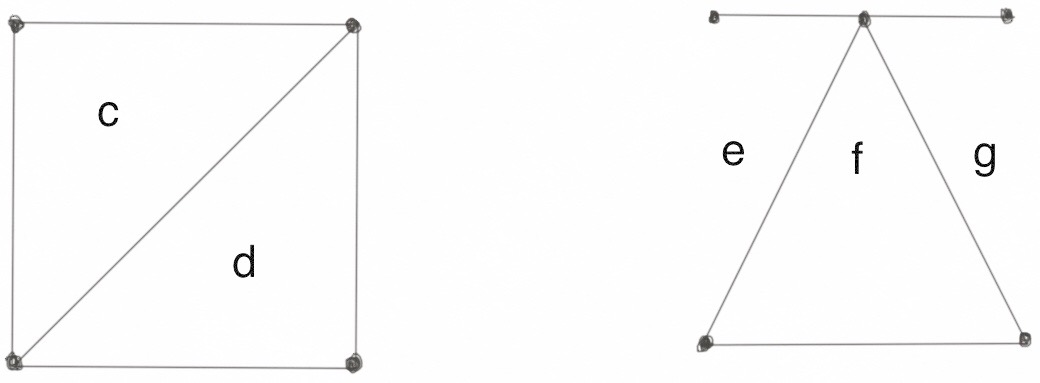}  
\caption{singular generating curves \label{overflow}}
\end{figure}

Now we will see that $\eta_\G$ is \emph{not trivial singular curve.} Suppose the contrary, then we can choose some Schottky domain so that $\eta_\G$  is given by singular generating curves $\xi$ which are limit of generating curves of quasi-circles.
By edge and degree restrictions and girth of the graph, we have possible $\xi$ given by combinations of: $\bigboxslash$ , $\bar{\bigtriangleup}$ see Figure $9$. We label the triangular regions according in Figure $9$. In addition we label the tangency triangular region in Figure $8$ by $a,b$. There are three possible combinations of $\xi$: $\bigboxslash\bigboxslash$, 
$\bar{\bigtriangleup}\bar{\bigtriangleup}$, $\bigboxslash\bar{\bigtriangleup}.$ We show that none of them can form $\eta$ as $\Gamma$ invariant curve.\par
Denote generators of the new Schottky domain by $\gamma,\rho$. Consider $\bigboxslash\bigboxslash$. We denote the triangular region of the other 
$\bigboxslash$ by $c',d'$ respectively. Since they generate $\eta$, there exists $\psi\in\Gamma$
such that $c=\psi(a), d=\psi\beta(b), \rho(c')=\psi\beta\alpha^{-1}(a),\rho(d')=\psi\beta\alpha^{-1}\beta(b), \rho\gamma^{-1}(c)=\psi\beta\alpha^{-1}\beta\alpha^{-1}(a).$ This implies that $\rho\gamma^{-1}(c)=\psi(\beta\alpha^{-1})^2\psi^{-1}(c).$ Hence $\psi^{-1}\rho\gamma^{-1}\psi=(\beta\alpha^{-1})^2.$ 
After simple computations this equality implies either $\rho=\gamma^{-1}$, or $r\rho^{-1}r=\gamma$ for some $r\in\Gamma.$ However since $\rho,\gamma$ are free generators, this is impossible.  
Hence this relation implies $\rho,\gamma$ are not generators expressible in terms of $\beta,\alpha.$ \par
Similarly for $\bar{\bigtriangleup}\bar{\bigtriangleup}$ we denote the triangular regions of second $\bar{\bigtriangleup}$ by $f'$ correspondingly. Then we have, $f=\phi(a), \rho(f')=\phi\beta\alpha^{-1}(a),\rho\gamma^{-1}(f)=\phi\beta\alpha^{-1}\beta\alpha^{-1}(a)$ for some $\phi\in\Gamma.$ Hence we have $\rho\gamma^{-1}=\phi(\beta\alpha^{-1})^2\phi^{-1}.$ It follows from previous argument which implies $\rho,\gamma$ are not generators of $\Gamma$.\par 
Finally consider $\bigboxslash\bar{\bigtriangleup}.$ There exists $\lambda\in\Gamma$ where we have, $c=\lambda(a),d=\lambda\beta(b),\rho(e)=\lambda\beta\alpha^{-1}(a)$, 
$\rho(f)=\lambda\beta\alpha^{-1}\beta(b),\rho(g)=\lambda\beta\alpha^{-1}\beta\alpha^{-1}(a),\rho\gamma^{-1}(c)=\lambda\beta\alpha^{-1}\beta\alpha^{-1}\beta(b),
\rho\gamma^{-1}(d)=\lambda\beta\alpha^{-1}\beta\alpha^{-1}\beta\alpha^{-1}(a).$ Hence using these relations we have, 
$(\beta\alpha^{-1})^{-3}\lambda^{-1}\rho\gamma^{-1}\lambda=\lambda^{-1}\gamma\rho^{-1}\lambda(\beta\alpha^{-1})^2.$ 
Set $\chi=(\beta\alpha^{-1})^{-3}\lambda^{-1}\rho\gamma^{-1}\lambda.$ Then we have $\chi=\chi^{-1}\alpha\beta^{-1}.$ This implies $\chi^2=\alpha\beta^{-1}$, which is impossible.
\par
Hence it follows from above argument and the assumption of Lemma 3.11, $\Gamma$ is not classical Schottky group, we can only have the possibility of having tangency as fixed points. However, one can \emph{not} choose another Schottky domain of Jordan curves which would exclude fixed point outside these Jordan curves. Hence singularity at fixed point can not be trivialized by choose different domains i.e. they are \emph{non-removable}.\par
Similarly for $k$ circles with tangency. Following previous argument, we have either all pairs tangency or we can remove the tangency one by one as before by choose difference classical Schottky domain circles. For all pairs tangency we have $\gamma_i,\gamma_j$ gives tangency. It's sufficient to assume $i\not=j$ and  tangent point don't maps into other tangent point, otherwise it gives fixed point. Similarly as before they generates $\eta_\G$ with links of collection of closed graphs of degree four edges. It follows from previous argument, we can not have singular generating curves that gives $\eta_\G$. \par
Consider degeneracy. In this case we have circles degenerating into points. If we have a singularity at degenerated point which is limit point then we know it can not be trivialized by choose another Schottky domain. Now on the other hand, suppose we have all the degenerate points are not as limit points and, these singularities are trivial. Then $\eta_\G$ is given by singular generating curves with respect to some Schottky domain of $\Gamma$ bounded by disjoint Jordan curves 
$\mathscr{C}.$ This Schottky domain is the limit of sequence of Schottky domain bounded by Jordan curves $\mathscr{C}_n$ of $\Gamma_n$. Denote the corresponding sequence of quasi-circles by $\eta_n.$ Let $\mathscr{C}_{i,n}$ be the Jordan curves that bounds the part of the regions which encloses limit points of $C_{i,n}$ (the circles degenerating to points). Then with respect to $\mathscr{C}_n$ and since $\eta_n\to\eta_\Gamma$, we must have $\eta_\Gamma$ contains infinite numbers of singularities within $\mathscr{C}_{i,n}$. However since $C_{i,n}$ degenerates into points, we can only have finite collection of singularities within that region, which is contradiction. Hence we can not have $\eta_\Gamma$ given by singular generating curves.
\end{proof}

\section{Proof of Theorem \ref{main}}
\begin{proof}(Theorem \ref{main})\vspace{0.2cm}\\
First note that by Selberg Lemma we can just assume Kleinian group to be torsion-free.\par
Now note that if a Kleinian group $\G$ of $\mathfrak{D}_\G<1$ then it must be free. To show this, assume otherwise. Since $\mathfrak{D}_\G<1$ and $\G$ is purely loxodromic, it is convex-cocompact of second-kind. There exists an imbedded surface 
$\mathscr{R}=\Omega_\G/\G$ in $\mathbb{H}^3/\G.$ If $\mathscr{R}$ is incompressible then, subgroup $\pi_1(\mathscr{R})\subset\G$ have $\mathfrak{D}_{\pi_1(\mathscr{R})}=1$ 
which is contradiction. If $\mathscr{R}$ is compressible then, we can cut along compression disks. We either end with incompressible surface as before or after finitely many steps of cutting we obtain topological ball, which implies $\mathbb{H}^3/\G$ is handle-body, hence free. \par

Let $\G\in\mathfrak{J}^1_g$ be a non-classical Schottky group with $\mathfrak{D}_\G<1.$ Let $\{\G_n\}\subset\mathfrak{H}$, and $\G_n\to\G.$ Since $\G$ is Schottky group, it follows from Lemma \ref{open}, that there exists an open set $\mathscr{O}(\eta_\G)$
such that every element is quasi-circle of $\G$. Since $\G$ is a non-classical Schottky group, hence by Lemma \ref{singular}, we must have every open set about 
$\eta_\G$ contains some non-quasi-circle, in-particular we must have $\mathscr{O}(\eta_\G)$ contain a non-quasi-circle. Hence we must have $\G$ is a classical Schottky group. This implies $\mathfrak{J}_{g,o}\cap\mathfrak{J}_g^1$ is both open and closed in $\mathfrak{J}_g^1$ i.e. connected component of 
$\mathfrak{J}_g^1.$ Now it follows from Proposition \ref{H}, $\mathfrak{J}_{g,o}\cap\mathfrak{J}_g^1=\mathfrak{J}_g^1.$\par
Finally, sharpness comes from the fact that, there exists Kleinian groups which is not free of Hausdorff dimension equal to one. Hence we have our result.
\end{proof}
\noindent E-mail: geometricgroup@gmail.com\\
\pdfbookmark[1]{Reference}{Reference}
\bibliographystyle{plain}

\end{document}